\documentclass[a4paper]{amsart}
\usepackage{amsthm,amsmath,amssymb,mathrsfs,graphicx, xcolor}
\usepackage[latin1]{inputenc}
\pagestyle{plain}

 \topmargin 0mm
 \textheight 214mm
 \setlength{\itemsep}{0pt}
 \setcounter{totalnumber}{3}
 \setcounter{topnumber}{1}
 \setcounter{bottomnumber}{3}
 \setcounter{secnumdepth}{3}

 \newtheorem{thm}{Theorem}
 \newtheorem{prop}[thm]{Proposition}
 \newtheorem{lemma}[thm]{Lemma}
 \newtheorem{cor}[thm]{Corollary}

 \theoremstyle{definition}
 \newtheorem{definition}[thm]{Definition}
 \newtheorem{ex}[thm]{Example}

\newtheorem{const}[thm]{Construction}

 \theoremstyle{remark}
 \newtheorem{remark}[thm]{Remark}
\numberwithin{thm}{section}

\def\Spec{{\rm Spec}\,}

\def\GL{{\rm GL}}
\def\SL{{\rm SL}}

\def\Quot{{\rm Quot}}

\def\Mat{{\rm M}}

\def\rk{{\rm rk}}
\def\defect{{\rm def}}

\def\dom{{\rm dom}}
\def\codim{{\rm codim}}

\def\dim{{\rm dim}\,}

\def\defect{{\rm def}}

\def\N{{N}}

\def\m{{\mathfrak m}}
\def\Z{{\mathbb Z}}

\def\Q{{\mathbb Q}}

\def\O{{\mathcal O}}
\def\S{{\mathbb S}}
\def\a{{\mathfrak a}}

\long\def\forget#1{}

\begin{document}

\begin{title}
{Minimal Newton strata in Iwahori double cosets}
\end{title}

\author{Eva Viehmann}

\address{Technische Universit\"at M\"unchen\\Fakult\"at f\"ur Mathematik - M11 \\ Boltzmannstr. 3\\85748 Garching bei M\"unchen\\Germany}
\email{viehmann@ma.tum.de}
\thanks{The author was partially supported by ERC Consolidator Grant 770936:\ NewtonStrat.}


\begin{abstract}{The set of Newton strata in a given Iwahori double coset in the loop group of a reductive group $G$ is indexed by a finite subset of the set $B(G)$ of Frobenius-conjugacy classes. For unramified $G$, we show that it has a unique minimal element and determine this element. Under a regularity assumption we also compute the dimension of the corresponding Newton stratum. We derive corresponding results for affine Deligne-Lusztig varieties.}
\end{abstract}
\maketitle
\section{Introduction}\label{intro}

Let $F$ be a local field with ring of integers $\mathcal O_F$, uniformizer $t$ and residue field $\mathbb{F}_q$ of characteristic $p$. Let $L$ denote the completion of the maximal unramified extension of $F$, and ${\mathcal O_L}$ its ring of integers. Throughout the paper we assume that $F$ is of equal characteristic $p$. However, most results can then be translated literally to the arithmetic case, i.e.~to $F=\mathbb Q_q$, and to the reduction of Shimura varieties (see Section \ref{secredGunr}). In particular, our main theorem (Theorem \ref{thmmain} below) also holds in that context.

Let $G$ be an unramified reductive group over $F$, and let $K$ be reductive over $\O_F$ with $K_F=G$. Denote by $\sigma$ the Frobenius of $L$ over $F$. For $b\in G(L)$ let $$[b]_G=[b]=\{g^{-1}b\sigma(g)\mid g\in G(L)\}$$ be its $\sigma$-conjugacy class and let $B(G)=\{[b]\mid b\in G(L)\}$. We fix a Borel subgroup $B$ and a maximal torus $T\subset B$ of $G$, both defined over $\O_F$. Then the elements $[b]\in B(G)$ are classified by Kottwitz \cite{Kottwitz1} by two invariants, the Newton point $\nu_b\in X_*(T)_{\dom,\mathbb{Q}}$ and the Kottwitz point $\kappa_G(b)\in \pi_1(G)_{\Gamma}$. Here $\pi_1(G)$ is the fundamental group of $G$, i.e.~the quotient of $X_*(T)$ by the coroot lattice, and $\Gamma$ denotes the absolute Galois group of $F$. There is a partial order $\preceq$ on $B(G)$ defined by $[b]\preceq [b']$ if and only if $\kappa_G(b)=\kappa_G(b')$ and $\nu_{b'}-\nu_b$ is a non-negative rational linear combination of positive coroots. A $\sigma$-conjugacy class $[b]$ is called basic if $\nu_b$ is central in $G$. If $[b]$ is basic, it is the unique minimal element of the set of $[b']$ with $\kappa_G(b')=\kappa_G(b)$. This induces a bijection between $\pi_1(G)_{\Gamma}$ and the set of basic $[b]\in B(G)$.

Let $I\subset K$ be the Iwahori sub-group scheme whose reduction modulo $t$ is the opposite of the Borel subgroup $B$. Let $\widetilde W$ be the Iwahori-Weyl group of $G$, where for all details on the notation we refer to Section \ref{secnot}. Then $G(L)=\coprod_{x\in\widetilde W} IxI$. Here we use the same letter $x$ to denote an element of $\widetilde W$ and a chosen representative in $G(L)$. For fixed $x\in\widetilde W$ we consider the set $$B(G)_x=\{[b]\in B(G)\mid [b]\cap IxI\neq\emptyset\}$$ and for $[b]\in B(G)$ let $$\mathcal N_{x,[b]}=\mathcal N_{x,[b]}^G:= [b]\cap IxI.$$ By \cite{RR} this is the set of closed points of a locally closed (reduced) subscheme of $IxI$ which we call the Newton stratum of $[b]$ in $IxI$. In this context two very natural questions are:
\begin{itemize}
\item What is $B(G)_x$?
\item What is the codimension of $\mathcal N_{x,[b]}$ in $IxI$?
\end{itemize}
In general, still very little is known. Let us describe some partial answers that have been obtained due to the joined effort of several people. An obvious necessary condition for $[b]\in B(G)_x$ is that $\kappa_G(b)=\kappa_G(x)$. Recent results of G\"ortz, He and Nie \cite{GoertzHeNie} give a necessary and sufficient condition determining if the unique basic element of $B(G)$ satisfying $\kappa_G(b)=\kappa_G(x)$ is indeed in $B(G)_x$, compare Section \ref{secbasic}. If this is the case, it is the unique smallest element of $B(G)_x$ with respect to the order on $B(G)$. Another element of $B(G)_x$ that is of particular interest is the unique maximal element $[b_x]$ of $B(G)_x$, which coincides with the generic $\sigma$-conjugacy class in the irreducible double coset $IxI$. There are descriptions of $[b_x]$ that also give finite algorithms to compute it, in \cite{trunc1} via the partial order on $\widetilde W$ and in \cite{LizQBG} via the quantum Bruhat graph. However, none of them provides a closed formula for $[b_x]$. A complete description of $B(G)_x$ is only known in very particular cases such as for example if $x$ is of minimal length in its $\sigma$-conjugacy class (in which case $B(G)_x=\{[x]\}$, compare \cite{HeAnnals}) or if $G={\rm SL}_3$ (see \cite{LizThesis}). In general (already for $G=\SL_3$), the partially ordered set $B(G)_x$ may be non-saturated, i.e. there may be gaps in form of elements $[b_1]\preceq [b_2]\preceq [b_3]\in B(G)$ with $[b_1], [b_3]\in B(G)_x$ and $[b_2]\notin B(G)_x$.

Although the dimensions of $\mathcal N_{x,[b]}\subset IxI$ are a priori infinite, such dimensions and the codimension can be defined in a meaningful (and finite) way. Then they can be related to the dimension of the corresponding affine Deligne-Lusztig variety defined as $$X_x(b)=\{g\in G(L)/I\mid g^{-1}b\sigma(g)\in IxI\}.$$ Indeed, by \cite{HeCDM}, Theorem 2.23 we have
\begin{equation}\label{eq4}
 \dim X_x(b) = \ell(x) - \langle 2 \rho, \nu(b) \rangle - \codim(\mathcal{N}_{[b]}\subset IxI).
\end{equation}
By \cite{cordial}, Corollary 2.11, a similar statement holds for the individual irreducible components. Again, very little is known about these dimensions. Notice also that in general there are Newton strata and affine Deligne-Lusztig varieties which are not equidimensional (see \cite{GoertzHe}, 5.2 for an example for $G$ of type $A_3$ and $b=1$ basic). There is a good approximation to $\dim X_x(b)$ by the so-called virtual dimension introduced by He in \cite{HeAnnals}, 10.1 as 
\begin{equation}\label{defvd}
d_x^G(b)=d_x(b)=\frac{1}{2}\bigl(\ell(x)+\ell(\eta(x))-\defect_G b\bigr)-\langle\rho, \nu_b^G\rangle,
\end{equation}
where for notation we refer again to Section \ref{secnot}. By \cite{HeCDM}, Theorem 2.30 we have
\begin{equation*}
d^G_x(b)\geq \dim X_x(b).
\end{equation*}
If $X_x(b)\neq \emptyset$ we set
$$\Delta_x(b)=\Delta^G_x(b)=d_x^G(b)-\dim X_x(b).$$ If the basic locus in $IxI$ is non-empty, and one assumes in addition that $x$ is in the shrunken Weyl chambers, then He shows that for this basic locus, $\Delta_x(b)=0$ (compare Section \ref{secbasic}). 

Results of E.~Milicevic and the author (\cite{cordial}, Theorem 2.19) study those $x$ where $\Delta_x(b_x)=0$ for the generic $[b_x]$ in $IxI$, and call them cordial. For cordial $x$, none of the above-mentioned phenomena ($\Delta_x(b)\neq 0$ for some $[b]\in B(G)_x$, non-equidimensionality of some $X_x(b)$, or gaps in $B(G)_x$) occurs in $IxI$. Corollary \ref{corconv} below provides a partial converse to this result. Altogether it therefore seems promising to approach the above two questions using the following three steps:
\begin{itemize}
\item[I.] What are the minimal elements of $B(G)_x$, and what is the dimension of the corresponding stratum? 
\item[II.] What is the maximal element $[b_x]$ of $B(G)_x$ (in terms of a closed formula), and what is $\Delta_x(b_x)$? 
\item[III.] What are the gaps in $B(G)_x$, which strata are not equidimensional, and how are these phenomena reflected in the values of $\Delta_x$?
\end{itemize}
The main result of this work is concerned with the first of these questions.
\begin{thm}\label{thmmain}
\begin{enumerate}
\item For every $x\in\widetilde W$ the set $B(G)_x$ has a unique minimal element $[m_x]$, which is obtained as follows. Let $J$ be minimal such that $x\a$ is a $(J,w,\delta)$-alcove for some $w\in W$, where $\a$ is the base alcove corresponding to $I$. Then $w^{-1}x\delta(w)\in \widetilde W_J$. Let $M_J$ be the Levi subgroup defined by $J$ and $[b_0]_{M_J}$ the unique basic element of $B(M_J)$ with $\kappa_{M_J}(b_0)=\kappa_{M_J}(w^{-1}x\delta(w))$. Then $[m_x]=[b_0]_G$. 
\item Assume that $x$ is in the regular shrunken Weyl chambers. Then $$\dim X_x(m_x)=d_x(m_x),$$ or equivalently $\Delta_x(m_x)=0$. 
\end{enumerate}
\end{thm}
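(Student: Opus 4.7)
The overall strategy is to reduce both statements to computations inside the Iwahori--Weyl group $\widetilde W_J$ of the Levi $M_J$, via the Hodge--Newton type reduction for $(J,w,\delta)$-alcoves proved by G\"ortz, He and Nie \cite{GoertzHeNie}.

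For Part (1), I would first check that a minimal $J \subseteq \Delta$ with $x\a$ a $(J,w,\delta)$-alcove is well-defined: existence is trivial (take $J=\Delta$, $w=1$), and I would argue, following the descent arguments of \cite{GoertzHeNie}, that the collection of such $J$ is closed under intersection, which forces a unique minimum. Fix such $J$ and set $y = w^{-1}x\delta(w)\in\widetilde W_J$. The Hodge--Newton reduction identifies $B(G)_x$ with the image of $B(M_J)_y$ under the canonical map $B(M_J)\to B(G)$. To see that $[m_x]=[b_0]_G$ actually lies in $B(G)_x$, I would invoke the G\"ortz--He--Nie criterion for non-emptiness of the basic locus (recalled in Section~\ref{secbasic}): the minimality of $J$ is precisely what guarantees $[b_0]_{M_J}\in B(M_J)_y$.

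For the minimality of $[m_x]$ in $B(G)_x$, take any $[b]\in B(G)_x$ and lift to $[b']\in B(M_J)_y$. The relation $[b']\succeq_{M_J}[b_0]_{M_J}$ is automatic, since $[b_0]_{M_J}$ is basic in its $\kappa_{M_J}$-fiber. The crux is the lemma that for $[b']$ arising from $B(M_J)_y$ the $M_J$-dominant Newton point $\nu_{b'}^{M_J}$ is already $G$-dominant---an immediate consequence of the $(J,w,\delta)$-alcove condition, which enforces the correct sign of the pairing of Newton slopes with roots outside $\Delta_J$. Granted this, the relation transports verbatim to $[b]\succeq_G[m_x]$, since the simple $M_J$-coroots are simple $G$-coroots.

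For Part (2), I would combine the Hodge--Newton decomposition at the geometric level with known dimension formulas. The $(J,w,\delta)$-alcove decomposition exhibits $X_x(m_x)$ as a fibration over the basic affine Deligne--Lusztig variety $X_y^{M_J}(b_0)$, with fiber dimension given explicitly in terms of $\ell(x)-\ell(y)$ and the roots of the unipotent radical of $P_J$. The G\"ortz--He--Nie basic-locus dimension formula (Section~\ref{secbasic}) yields $\dim X_y^{M_J}(b_0) = d_y^{M_J}(b_0)$, provided $y$ lies in the shrunken Weyl chambers of $M_J$; I would verify that this follows from the assumed regularity of $x$. It then remains to establish the numerical identity $d_x^G(m_x) = d_y^{M_J}(b_0) + (\text{fiber dimension})$, and this is the main technical obstacle: one must carefully track how $\ell(\eta(x))$, $\defect_G(m_x)$, and $\langle\rho,\nu_{m_x}^G\rangle$ decompose into $M_J$- and non-$M_J$-contributions, and match the non-$M_J$ part with the fiber dimension coming from the unipotent radical.
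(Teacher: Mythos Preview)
Your outline for Part~(2) matches the paper's approach closely. The paper splits the comparison $\Delta_x^G(b) = \Delta_{\tilde x}^{M_J}(b)$ into the four summands appearing in the virtual-dimension formula and matches them individually (Proposition~\ref{propprop(1)}, Lemma~\ref{lemlast}, equality of defects, and \cite{GHKR2}, Theorem~1.1.4(b)). One correction to your picture: by \cite{GHKR2}, Theorem~1.1.4(b) one has $\dim X_x^G(b) = \dim X_{\tilde x}^{M_J}(b)$ directly, with no fiber-dimension term on the ADLV side; the length difference $\ell_G(x) - \ell_{M_J}(\tilde x)$ is absorbed entirely by the passage from $\langle \rho_G, \nu_b\rangle$ to $\langle \rho_{M_J}, \nu_b\rangle$ (Proposition~\ref{propprop(1)}(1)).

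For Part~(1) there is a genuine gap. Your ``crux'' lemma---that for every $[b'] \in B(M_J)_y$ the $M_J$-dominant Newton point $\nu_{b'}^{M_J}$ is already $G$-dominant---is false. Example~\ref{exnew} exhibits, for $G = \GL_4$ and $M_J \cong \GL_2 \times \GL_2$, a $(J,w,\delta)$-alcove element for which $B(M_J)_{\tilde x}$ has four elements but $B(G)_x$ only three: two distinct $M_J$-classes have the same image in $B(G)$, so their $M_J$-dominant Newton points are $W_G$-conjugate and at least one fails to be $G$-dominant. The alcove condition \emph{does} force $G$-dominance for the basic class (this is Proposition~\ref{propprop(1)}(2), whose proof uses essentially that $\nu_{b_0}$ is central in $M_J$), but not for arbitrary $[b']$. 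The paper bypasses the issue via Theorem~\ref{lemcomporder}: for quasi-split $H \subseteq G$, the relation $[b]_H \preceq [b']_H$ implies $[b]_G \preceq [b']_G$, proved by a closure argument in $K$-double cosets that makes no dominance assumption on either Newton point. Alternatively, once you know $\nu_{b_0}^{M_J}$ is $G$-dominant you could combine $\nu_{b'}^{M_J} \succeq_G \nu_{b_0}^{M_J}$ (positive $M_J$-coroots being positive $G$-coroots) with the standard fact that the $G$-dominant representative of any $\nu$ dominates $\nu$ itself; but that is not the argument you wrote, and it still rests on Proposition~\ref{propprop(1)}(2).

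A minor side remark: the paper neither claims nor needs uniqueness of the minimal $J$. Lemma~\ref{lemmin} takes \emph{any} minimal $J$, and your proposed closure-under-intersection argument would itself require justification.
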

Here, $\delta$ is the automorphism of $\widetilde W$ induced by $\sigma.$ The slightly technical notions of $(J,w,\delta)$-alcove and of being regular shrunken are explained in Section \ref{secpalc} and Section \ref{secshr}, respectively. We prove the first assertion in Section \ref{secbasic}, and the second in Section \ref{sec7}. On the way, we prove several comparison results between $G$ and a Levi subgroup. 

From \cite{cordial}, Theorem 2.19 and Theorem \ref{thmmain} above we obtain the following corollary which gives another result towards Step III above, complementing the results of \cite{cordial}.

\begin{cor}\label{corconv}
Suppose that $x\in\widetilde W$ satisfies $\Delta_x^G(b_x)>\Delta_x^G(m_x)$, e.g.~if $\Delta_x^G(b_x)>0$ and $x$ is in the regular shrunken Weyl chambers. Then there is a $[b'] \in B(G)$ such that 
\begin{enumerate}
\item[(a)] $[m_x]< [b']< [b_x]$ but $[b']\notin B(G)_x$ (i.e.~$B(G)_x$ is not saturated), or
\item[(b)] $[m_x]< [b']< [b_x]$ and $[b']\in B(G)_x$, but the closure of $[b']\cap IxI$ in $IxI$ is not the union of all $[b'']\cap IxI$ for $[b'']\in B(G)_x$ with $[b'']< [b']$.\qed
\end{enumerate}
\end{cor}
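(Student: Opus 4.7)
The plan is to proceed by contraposition: I will assume that neither (a) nor (b) holds for any $[b']$ strictly between $[m_x]$ and $[b_x]$ and derive $\Delta_x^G(b_x)\leq\Delta_x^G(m_x)$, contradicting the hypothesis. The ``e.g.''~clause is immediate from Theorem \ref{thmmain}(2): when $x$ lies in the regular shrunken Weyl chambers, $\Delta_x^G(m_x)=0$, so $\Delta_x^G(b_x)>0$ is equivalent to $\Delta_x^G(b_x)>\Delta_x^G(m_x)$.

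Under the failure of (a), Theorem \ref{thmmain}(1) identifies $[m_x]$ as the minimum of $B(G)_x$ and forces $B(G)_x$ to coincide with the closed interval from $[m_x]$ to $[b_x]$ in $B(G)$. I fix a maximal chain $[m_x]=[c_0]\prec[c_1]\prec\cdots\prec[c_L]=[b_x]$, which then lies entirely in $B(G)_x$, and recall Chai's length formula
$$L = \langle\rho,\nu_{b_x}-\nu_{m_x}\rangle+\tfrac{1}{2}\bigl(\defect_G m_x-\defect_G b_x\bigr).$$
Under the failure of (b) I have $\mathcal N_{x,[c_{i-1}]}\subset\overline{\mathcal N_{x,[c_i]}}$ for every $1\leq i\leq L$ (the case $i=L$ is tautological, as $\mathcal N_{x,[b_x]}$ is open dense in the irreducible $IxI$). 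Since $\mathcal N_{x,[c_i]}$ is locally closed, hence open dense in $\overline{\mathcal N_{x,[c_i]}}$, the disjoint stratum $\mathcal N_{x,[c_{i-1}]}$ is forced into the boundary $\overline{\mathcal N_{x,[c_i]}}\setminus\mathcal N_{x,[c_i]}$, which meets each irreducible component of $\overline{\mathcal N_{x,[c_i]}}$ in a proper closed subset. A component-wise dimension count then yields $\dim\mathcal N_{x,[c_{i-1}]}\leq\dim\mathcal N_{x,[c_i]}-1$; iterating along the chain and using $\dim\mathcal N_{x,[b_x]}=\ell(x)$ delivers $\codim\mathcal N_{x,[m_x]}\geq L$.

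Finally, combining (\ref{eq4}), (\ref{defvd}) and $\codim\mathcal N_{x,[b_x]}=0$, a direct rearrangement gives
$$\Delta_x^G(b_x)-\Delta_x^G(m_x) \;=\; L - \codim\mathcal N_{x,[m_x]} \;\leq\; 0,$$
which is the desired contradiction. The main technical obstacle is the dimension drop along each cover: since the intermediate Newton strata need not be irreducible or equidimensional, the boundary argument has to be executed component by component, relying on the locally closed structure of strata from \cite{RR} and on $\mathcal N_{x,[c_i]}$ being dense in its closure. Pinning down the normalization of Chai's length formula consistent with (\ref{defvd}) is the only other minor bookkeeping point.
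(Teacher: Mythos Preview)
Your argument is correct and is precisely the computation that the paper leaves implicit: the paper gives no proof beyond the sentence ``From \cite{cordial}, Theorem 2.19 and Theorem \ref{thmmain} above we obtain the following corollary'' and a \qed, and what you have written is exactly the chain-and-dimension-drop argument underlying that citation (Chai's length formula plus \eqref{eq4} and \eqref{defvd}), together with Theorem \ref{thmmain} for the ``e.g.''\ clause and the existence of a unique minimum. One cosmetic point: condition~(b) as printed uses strict inequality $[b'']<[b']$, which taken literally would make the negation vacuous; you have (correctly) read it as $[b'']\leq[b']$, which is clearly the intended statement and is all you use.
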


\noindent\emph{Acknowledgment.} We thank the two referees for helpful remarks, in particular for suggesting a simplification of the proof of Theorem \ref{thmend}. We thank U.~G\"ortz for a helpful discussion on sign conventions in Bruhat-Tits buildings. 

\section{Notation}\label{secnot}

\subsection{}
We fix an $L$-split maximal torus $S$ defined over $\O_F$ and let $T$ be the centralizer of $S$ in $G$. Let $$\widetilde W=N_S(L)/T(L)_0$$ be the Iwahori-Weyl group where $N_S$ is the normalizer of $S$ in $G$ and where $T(L)_0\subset K(\O_L)$ is the unique parahoric subgroup of $T$. Then the Frobenius automorphism $\sigma$ of $L$ over $F$ induces an automorphism of $\widetilde W$, which we denote by $\delta$. For each $x\in \widetilde W$ we choose a representative in $N_S(L)$ which we denote by the same letter $x$. We choose $B\subset K$ a Borel subgroup defined over $\O_F$ and let $I$ be the Iwahori subgroup whose reduction modulo $t$ agrees with the reduction of the opposite of $B$. It corresponds to a $\sigma$-invariant base alcove $\a$ containing the vertex corresponding to $K$ in the apartment for $S$ of the Bruhat-Tits building of $G_L$. We use the convention that $\a$ lies in the dominant Weyl chamber, and that a translation element $\lambda\in \widetilde W$ acts by translation by $\lambda$ on the apartment for $S$. 

We have $$G(L)=\coprod_{x\in \widetilde W}IxI.$$ Let $W_a$ be the affine Weyl group (for more details on the relevant notation and further references compare \cite{GoertzHeNie}, 2). Then our choice of $I$ induces a length function and a Bruhat order on $W_a$. There is a short exact sequence $$1\rightarrow W_a\rightarrow \widetilde W\rightarrow \Omega\rightarrow 1$$ where $\Omega$ is the stabilizer of $\a$ in the Bruhat-Tits building. It identifies $\widetilde W$ with $W_a\rtimes \Omega$. We extend the length function and the partial order from $W_a$ to $\widetilde W$ by setting $\ell(x)=0$ for $x\in \Omega$ and $x\leq y$ if and only if $x$ and $y$ are of the form $x't$ and $y't$ for some $t\in \Omega$ and $x'\leq y'\in W_a$.

Let $W=N_S(L)/T(L)$ be the (finite) Weyl group of $G$. Then the natural projection $\widetilde W\rightarrow W$ has kernel $X_*(T)$. We have a splitting $W\rightarrow \widetilde W$ associated with the (hyper)special vertex of $\a$ corresponding to $K$. It induces an isomorphism $\widetilde W\cong X_*(T)\rtimes W$.

Let $\Phi$ be the set of roots of $G$ over $L$ relative to $S$. For $a\in \Phi$ we denote by $U_a$ the corresponding root subgroup of $G$. Our choice of Borel subgroup determines a basis $\S$ of $\Phi$ of simple roots, which we also identify with the set of simple reflections in $W$. Let $\Phi^+$ be the set of positive roots. For a subset $J\subseteq \S$ let $W_J$ be the subgroup of $W$ generated by the simple reflections in $J$ and let $\Phi_J\subseteq \Phi$ be the roots spanned by $J$. Let $\Phi^+_J=\Phi^+\cap\Phi_J$. Let $M_J$ be the subgroup of $G$ generated by $T$ and all $U_a$ for $a\in \Phi_J$. If $\delta(J)=J$, then $M_J$ is defined over $F$. Let $\widetilde W_J=X_*(T)\rtimes W_J$. Then $\widetilde W_J$ is the Iwahori-Weyl group of $M_J$.

For $g\in G(L)$ (or for a subset of $G(L)$) and $x\in G(L)$ we write ${}^xg=xgx^{-1}$, and we also use ${}^{\sigma}g=\sigma(g)$.

\begin{definition}[\cite{GoertzHeNie}, 3.3]
Let $J\subseteq \mathbb{S}$ with $\delta(J)=J$ and $w\in W$. Let $x\in\widetilde W$. Then $x\a$ is called a $(J,w,\delta)$-alcove if $w^{-1}x\delta(w)\in \widetilde W_J$ and for every $\alpha\in w(\Phi^+-\Phi_J^+)$ we have $U_{\alpha}\cap {}^xI\subseteq U_{\alpha}\cap I$. 

In this context, we write $I_M={}^wM_J\cap I$ (following \cite{GoertzHeNie}), and $I_J=M_J\cap I$ (which we use more frequently).
\end{definition}

\subsection{Affine Deligne-Lusztig varieties in affine flag varieties}\label{secshr} For $x\in\widetilde W$ and $b\in G(L)$ the associated affine Deligne-Lusztig set is defined as $$X_x(b)=\{g\in G(L)/I\mid g^{-1}b\sigma(g)\in IxI\}.$$ It is the set of $\overline{\mathbb F}_p$-valued points of a locally closed, reduced subvariety of the affine flag variety, called the affine Deligne-Lusztig variety.

For $\mu\in X_*(T)$ let $t^{\mu}$ be the image of the fixed uniformizer $t$ under the map $\mu:\mathbb{G}_m\rightarrow T$.  We consider a map $\eta:\widetilde W\rightarrow W$ which is defined as follows. We write $x\in \widetilde W$ as $x=vt^{\mu}w$ where $v,w\in W$ and $t^{\mu}w$ maps the base alcove to the dominant chamber. Equivalently we can require that $w\in {}^{\mu}W$, i.e.~that it is the shortest representative of its $W_{\mu}$-coset $W_{\mu}w$ where $W_{\mu}$ is the centralizer of $\mu$ in $W$. Then $\eta(x)=\delta^{-1}(w)v$.

Using this notation one can explain the definition of the virtual dimension in \eqref{defvd}. Namely, $\ell(x)$ and $\ell(\eta(x))$ are the lengths of the two elements in $\widetilde W$ resp.~in $W$, and the defect is given by $\defect_G b=\rk_F G-\rk_F J_b$ where $\rk_F$ denotes the rank of a maximal $F$-split torus and where $J_b$ is the reductive group over $F$ with $$J_b(F)=\{g\in G(L)\mid g^{-1}b\sigma(g)=b\}.$$

Previous work on affine Deligne-Lusztig varieties in affine flag varieties indicates that the theory is more accessible and more is known if one assumes that the alcove $x\a$ is sufficiently far from the walls of the Weyl chambers in the Bruhat-Tits building. To make such an assumption precise, the shrunken Weyl chambers are defined as the set of $x$ such that $U_{\alpha}\cap {}^xI\neq U_{\alpha}\cap I$ for all roots $\alpha$. An element $x$ is called regular shrunken if it is shrunken and in addition $x$ can be written as $v_1t^\mu v_2$ with $v_1,v_2\in W$ and $\langle\alpha,\mu\rangle>0$ for all $\alpha\in\Phi^+$.

\section{The arithmetic case}\label{secredGunr}

By \cite{HeAnnals}, Theorem 6.1, Corollary 6.2, and Section 6.2, the condition $X_x(b)\neq \emptyset$ and also $\dim X_x(b)$ are expressed in terms of certain class polynomials and thus only depend on the combinatorial datum $(\widetilde W\cong X_*(T)\rtimes W, \delta, x, [b])$, and are independent of the group $G$ itself. Here we use that $[b]$ can be given by a representative $y$ in $\widetilde W$. 

{\it Claim.} $d_x(b)$, and thus $\Delta_x(b)$ can be expressed in terms of this combinatorial datum.

This is clear for $\ell(x)$ and $\ell(\eta(x))$. If we assume $[b]$ to be given by a representative $y$ in $\widetilde W$, then $\nu_b=\frac{1}{n} y\delta(y)\dotsm \delta^{n-1}(y)$ where $n$ is chosen such that $y\delta(y)\dotsm \delta^{n-1}(y)\in X_*(T)$. Finally, by \cite{KottwitzNewt} (1.9.1), $\defect_G(b)=\dim \a-\dim \a^{w_{b}}$ where $w_b\in W$ is the image of $y$ in the finite Weyl group. This proves the claim.

Let us now explain the analog of our main result in the arithmetic case. Let $F$ be a local field of mixed characteristic, and let all other data be as before (w.r.t.~our new choice of $F$). Then the affine Deligne-Lusztig varieties are defined as locally closed subschemes of Zhu's Witt-vector affine Grassmannian, compare \cite{Zhu}. In particular, there is a meaningful notion of dimension for them. We claim that then Theorem \ref{thmmain} also holds in this context. Indeed, in the same way as above one proves that also in the arithmetic case the statement of Theorem \ref{thmmain} is an assertion that only depends on the combinatorial datum $(\widetilde W \cong X_*(T)\rtimes W, \delta, x,y)$ where $y$ is a representative of $[b]$ in $\widetilde W$.  But the combinatorial datum associated with the problem in the arithmetic case is also represented by a group $G$ over a local function field, and $[y]\in B(G)$ for the function field case. Thus Theorem \ref{thmmain} for the function field case implies the same assertion for the arithmetic case. Similar considerations apply to other assertions concerning non-emptiness and dimension of affine Deligne-Lusztig varieties, for example to Theorem \ref{thmend}.

The arithmetic version of Theorem \ref{thmmain} in its turn can be applied to the reduction modulo $p$ of Shimura varieties with Iwahori level structure. Here, it implies that for every KR stratum in the reduction there is a unique minimal Newton stratum intersecting it non-trivially. 

\section{Normalized $(J,w,\delta)$-alcoves}\label{secpalc}

\begin{const}\label{remtransform}
Assume that $x\a$ is a $(J,w,\delta)$-alcove. Notice that in general, ${}^wM_J$ (and hence $I_M$) need not be $\sigma$-invariant if $w\neq\delta(w)$. However, $\sigma$-conjugation with $w^{-1}$ gives an isomorphism
\begin{align*}
I_Mx\sigma(I_M)=~&(wM_Jw^{-1}\cap I)x(\sigma(w)M_J\sigma(w)^{-1}\cap I)\\
&\quad\longrightarrow (M_J\cap {}^{w^{-1}}I)(w^{-1}x\delta(w))(M_J\cap {}^{\delta(w^{-1})}I)
\end{align*}
 that also preserves the Newton stratifications. 
 
By definition of $(J,w,\delta)$-alcoves, $x\a$ is a $(J,w,\delta)$-alcove if and only if it is a $(J,wm,\delta)$-alcove for any $m\in W_J$. Replacing $w$ by the minimal length representative of $wW_J$ we obtain $M_J\cap {}^{w^{-1}}I=I_J=M_J\cap I$. Then the above isomorphism reads $$I_Mx\sigma(I_M)\overset{\cong}{\longrightarrow} I_J(w^{-1}x\delta(w))I_J.$$
Notice however that we still have that ${}^{w^{-1}}I\neq I$ in general.
\end{const}

\begin{definition}
For $x\in \widetilde W$, we call $x\a$ a normalized $(J,w,\delta)$-alcove if it is a $(J,w,\delta)$-alcove such that $w$ is of minimal length in $wW_J$, or equivalently such that $M_J\cap {}^{w^{-1}}I=I_J$. Then we also call $x$ a $(J,w,\delta)$-alcove element.

In this case let $\tilde x=w^{-1}x\delta(w)$.
\end{definition}

\begin{lemma}\label{lemmin}
Let $x\in \widetilde W$ and let $J$ be a minimal element in the set of subsets of $\mathbb S$ such that there is a $w \in W$ such that $x\a$ is a $(J,w,\delta)$-alcove. Let $w$ be such that $x\a$ is a normalized $(J,w,\delta)$-alcove. Then $\tilde x\in  \widetilde{W}_J$ is not a $(J',w',\delta)$-alcove element for $M_J$ for any $\delta$-stable proper subset $J'\subset J$ and $w'\in W_{J}$. 
\end{lemma}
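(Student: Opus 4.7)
My plan is to argue by contradiction. Suppose there exist a $\delta$-stable $J' \subsetneq J$ and $w' \in W_J$ such that $\tilde x = w^{-1}x\delta(w)$ is a $(J',w',\delta)$-alcove element for $M_J$. I will then show that $x\a$ is itself a $(J',ww',\delta)$-alcove for $G$, which contradicts the minimality of $J$. The first defining condition $(ww')^{-1}x\delta(ww') \in \widetilde W_{J'}$ is immediate, since it rewrites as $(w')^{-1}\tilde x\delta(w') \in \widetilde W_{J'}$, the analogous condition inside $M_J$.

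For the root-subgroup condition I split
\[
\Phi^+ - \Phi_{J'}^+ = (\Phi^+ - \Phi_J^+) \sqcup (\Phi_J^+ - \Phi_{J'}^+)
\]
and treat each piece. If $\alpha \in ww'(\Phi^+ - \Phi_J^+)$, then since $W_J$ preserves $\Phi^+ - \Phi_J^+$ setwise (an element of $W_J$ modifies a root only by an element of $\mathbb{Z}\Phi_J$, leaving its coefficients on simple roots outside $J$ unchanged), one has $ww'(\Phi^+ - \Phi_J^+) = w(\Phi^+ - \Phi_J^+)$, and the required inclusion $U_\alpha \cap {}^xI \subseteq U_\alpha \cap I$ is exactly the $(J,w,\delta)$-alcove property of $x$.

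The delicate case is $\alpha = ww'\beta$ with $\beta \in \Phi_J^+ - \Phi_{J'}^+$. Then $w^{-1}\alpha = w'\beta \in \Phi_J$, so $U_{w^{-1}\alpha} \subseteq M_J$. Conjugating the inclusion to be proved by $w^{-1}$ and using $w^{-1}x = \tilde x\,\delta(w^{-1})$, it becomes
\[
U_{w^{-1}\alpha} \cap {}^{\tilde x\,\delta(w^{-1})}I \ \subseteq\ U_{w^{-1}\alpha} \cap {}^{w^{-1}}I.
\]
Here I invoke the normalized alcove condition $M_J \cap {}^{w^{-1}}I = I_J$, together with its $\delta$-twin $M_J \cap {}^{\delta(w)^{-1}}I = I_J$; the latter holds because $\delta$ preserves length and sends $W_J$ to $W_{\delta(J)} = W_J$, so that $\delta(w)$ is also of minimal length in $\delta(w)W_J$ and Construction~\ref{remtransform} applies to it. Intersecting each side with $M_J$ and using $\tilde x \in M_J$, both sides collapse to intersections with $I_J$, and the inclusion becomes $U_{w^{-1}\alpha} \cap {}^{\tilde x}I_J \subseteq U_{w^{-1}\alpha} \cap I_J$, which is precisely the $(J',w',\delta)$-alcove condition for $\tilde x$ applied to the root $w^{-1}\alpha \in w'(\Phi_J^+ - \Phi_{J'}^+)$.

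The main obstacle is this second case: one must carefully translate Iwahori intersections back and forth between $G$ and the Levi $M_J$. Both the normalization of $w$ (handling ${}^{w^{-1}}I$) and its $\delta$-twin for $\delta(w)$ (handling ${}^{\delta(w)^{-1}}I$) are essential. Once they are in place the two alcove conditions align, yielding a $(J',ww',\delta)$-alcove structure on $x\a$ and hence the desired contradiction with the minimality of $J$.
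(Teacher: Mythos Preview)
Your proof is correct and follows essentially the same route as the paper's: argue by contradiction, show $x\a$ is a $(J',ww',\delta)$-alcove, split the root set into $\Phi^+-\Phi_J^+$ and $\Phi_J^+-\Phi_{J'}^+$, and reduce the second piece to the $M_J$-alcove condition via the normalization identities $M_J\cap{}^{w^{-1}}I=I_J$ and $M_J\cap{}^{\delta(w)^{-1}}I=I_J$. The only cosmetic difference is that the paper obtains the second identity by applying $\sigma$ directly to the first (since $M_J$ and $I_J$ are $\sigma$-invariant), whereas you deduce it from $\delta(w)$ being minimal in $\delta(w)W_J$; both are fine.
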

Construction \ref{remtransform} shows that for every $x\in \widetilde W$, there exists a pair $(J,w)$ as in the lemma.
\begin{proof}
Assume that there is a proper subset $J'\subsetneq J$ such that $\tilde x\in \widetilde{W}_J$ is a $(J',w',\delta)$-alcove element for some $w'\in W_{J}$. We want to show that then $x\a$ is a $(J',ww',\delta)$-alcove. By the first property of $(J',w',\delta)$-alcoves, $(ww')^{-1}x\delta(ww')=(w')^{-1}\tilde x\delta(w')\in \widetilde W_{J'}$. We have to verify that for every $\alpha\in (ww')(\Phi^+-\Phi_{J'}^+)$ we have $U_{\alpha}\cap {}^xI\subseteq U_{\alpha}\cap I$. We first consider the case that $\alpha\in (ww')(\Phi^+-\Phi_J^+)=w(\Phi^+-\Phi_J^+)$, where the equality follows from $w'\in W_J$. In this case the property holds since $x\a$ is a $(J,w,\delta)$-alcove. Let now $\alpha\in (ww')(\Phi_J^+-\Phi_{J'}^+)$. Conjugating by $w$ the desired inclusion $U_{\alpha}\cap {}^xI\subseteq U_{\alpha}\cap I$ is equivalent to 
\begin{equation}\label{eqlemmin}
U_{w^{-1}\alpha}\cap {}^{w^{-1}x}I\subseteq U_{w^{-1}\alpha}\cap {}^{w^{-1}}I. 
\end{equation}
We have $w^{-1}\alpha\in w'(\Phi_J^+)\subseteq \Phi_J$. Since $x$ is normalized, we have $M_J\cap {}^{w^{-1}}I=I_J$. Applying $\sigma$ to this equality and using that the right hand side and $M_J$ are by definition $\sigma$-invariant, we also obtain $M_J\cap {}^{\delta(w)^{-1}}I=I_J$. By the first equality the right hand side of \eqref{eqlemmin} is equal to $U_{w^{-1}\alpha}\cap I$. The left hand side is equal to $U_{w^{-1}\alpha}\cap {}^{\tilde x \delta(w^{-1})}I$. From $\tilde x\in \widetilde W_J$ we obtain that this is equal to $U_{w^{-1}\alpha}\cap {}^{\tilde x \delta(w^{-1})}I_J=U_{w^{-1}\alpha}\cap {}^{\tilde x}(^{\delta(w^{-1})}I\cap M_J)=U_{w^{-1}\alpha}\cap {}^{\tilde x}I_J$. Since $w^{-1}\alpha\in w'(\Phi_J^+-\Phi_{J'}^+)$, \eqref{eqlemmin} is thus nothing but $U_{w^{-1}\alpha}\cap {}^{\tilde x}I\subseteq U_{w^{-1}\alpha}\cap I$, the second property of $\tilde x$ being a $(J',w',\delta)$-alcove element.
\end{proof}

\section{Comparing sets of $\sigma$-conjugacy classes}\label{sec5}

If $G$ is as above and $H$ a subgroup, then the inclusion $\iota$ induces a natural map $B(\iota):B(H)\rightarrow B(G)$. In general, and even if we assume that $H$ is the Levi component of a parabolic subgroup of $G$, this map is neither injective nor surjective, as can be seen in the following example.

\begin{ex}
Let $G=\GL_2$ and $H$ the diagonal torus. Consider
$$b_1=\left(\begin{array}{cc}1&0\\0&t\end{array}\right),\quad 
b_2=\left(\begin{array}{cc}0&1\\t&0\end{array}\right),\quad 
b_3=\left(\begin{array}{cc}t&0\\0&1\end{array}\right)$$

Then $b_1,b_3\in H(L)$ with $[b_1]_H\neq [b_3]_H$ because their images in $\pi_1(H)\cong \Z^2$ are different, but $b_1=sb_3\sigma(s)^{-1}$ where $s\in G(L)$ is the morphism exchanging the two basis elements. Thus $[b_1]_G=[b_3]_G$ and $B(\iota)$ is not injective.

Furthermore, $[b_2]_G$ does not have a representative in $H(L)$ because its Newton point is $(1/2,1/2)\in \Q^2$, and elements of the split torus $H$ have integral Newton points. Thus $B(\iota)$ is also not surjective.
\end{ex}

However, if $[b]_G\in B(G)$ is basic and $H$ is a standard Levi subgroup of $G$ defined over $F$, then the fiber $B(\iota)^{-1}([b]_G)$ has at most one element by \cite{GoertzHeNie}, Proposition 3.5.1. Notice that only this weaker statement is in fact shown in loc.~cit., compare also their erratum.

More can also be said if we require the classes to be comparable for the partial order. Since it seems to be of independent interest, we formulate the following theorem in greater generality than what is needed for the present purpose.
\begin{thm}\label{lemcomporder}
Let $F$ be a local field and let $H\subseteq G$ be quasi-split reductive groups over $F$.  Let $[b]_{H}\preceq [b']_{H}\in B(H)$, and let $[b]_G,[b']_{G}$ be the corresponding classes in $B(G)$.
\begin{enumerate}
\item $[b]_G\preceq [b']_{G}$.
\item If moreover $[b]_{H}\neq [b']_{H}$, then also $[b]_G\neq [b']_{G}$.
\end{enumerate}
\end{thm}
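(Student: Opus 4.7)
The plan is to reduce both parts to statements about Kottwitz invariants and Newton points, since these two data classify the elements of $B(H)$ and of $B(G)$.

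The Kottwitz comparison is immediate from functoriality of $\pi_1$: the induced map $\pi_1(H)_{\Gamma}\to\pi_1(G)_{\Gamma}$ carries $\kappa_H$ to $\kappa_G$, so $\kappa_H(b)=\kappa_H(b')$ yields $\kappa_G(b)=\kappa_G(b')$. This handles the Kottwitz side of~(1); and in the setting of~(2) it already forces $\nu_b^H\neq\nu_{b'}^H$ whenever $[b]_H\neq[b']_H$.

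For the Newton side of~(1), fix compatible maximal tori $T_H\subseteq T_G$, giving $X_*(T_H)_{\mathbb Q}\hookrightarrow X_*(T_G)_{\mathbb Q}$, and use the Kostant convex hull characterization of the dominance order: for two dominant elements in the same rational coroot coset, $\mu\preceq_G\lambda$ iff $\mu\in\mathrm{Conv}(W_G\cdot\lambda)$. From $[b]_H\preceq[b']_H$ and Kostant inside $H$, I obtain $\nu_b^H\in\mathrm{Conv}(W_H\cdot\nu_{b'}^H)$. Because $W_H\subseteq W_G$ and $\nu_{b'}^G$ is a $W_G$-translate of $\nu_{b'}^H$, this set is contained in the $W_G$-stable body $\mathrm{Conv}(W_G\cdot\nu_{b'}^G)$. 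Moving $\nu_b^H$ to its $G$-dominant representative $\nu_b^G$ by an element of $W_G$ preserves this body, so $\nu_b^G\in\mathrm{Conv}(W_G\cdot\nu_{b'}^G)$, and Kostant applied inside $G$ then yields $\nu_b^G\preceq_G\nu_{b'}^G$.

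For~(2), I argue by contradiction: suppose $\nu_b^G=\nu_{b'}^G$, so $\nu_b^H$ and $\nu_{b'}^H$ lie in a common $W_G$-orbit. Choose a $W_G$-invariant positive-definite inner product $B$ on $X_*(T_G)_{\mathbb R}$ (average any positive-definite form over $W_G$). Then $B(\nu_b^H,\nu_b^H)=B(\nu_{b'}^H,\nu_{b'}^H)$ rearranges to
\begin{equation*}
\|\nu_{b'}^H-\nu_b^H\|_B^{\,2}=-2\,B(\nu_b^H,\nu_{b'}^H-\nu_b^H).
\end{equation*}
Writing $\nu_{b'}^H-\nu_b^H=\sum c_i\alpha_i^\vee$ with $c_i\geq 0$ and $\alpha_i\in\Phi_H^+$, each $B(\nu_b^H,\alpha_i^\vee)$ is a positive multiple of $\langle\alpha_i,\nu_b^H\rangle\geq 0$ (by $H$-dominance of $\nu_b^H$). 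Hence the right-hand side is $\leq 0$ while the left-hand side is $\geq 0$, so both vanish, giving $\nu_b^H=\nu_{b'}^H$, a contradiction.

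The main obstacle I expect is the clean interplay of tori and Weyl groups in the stated generality. The argument above is transparent whenever $W_H\subseteq W_G$ act on a common rational cocharacter space, which happens in the Levi case $H=M_J$ the paper actually needs; for arbitrary quasi-split $H\subseteq G$ one must either reduce to this situation via a common torus or reformulate the argument at the level of absolute root data and slope morphisms.
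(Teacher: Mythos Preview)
Your argument is correct in the Levi case (the only case the paper actually uses), and it follows a genuinely different route from the paper's proof. For part~(1) the paper translates $\nu\preceq_H\nu'$ into a Schubert-closure inclusion $K_H\nu(t)K_H\subset\overline{K_H\nu'(t)K_H}$, pushes this forward to $G$, and reads off $\nu_{\dom}\preceq_G\nu'_{\dom}$; your convex-hull/Kostant formulation is the purely combinatorial shadow of the same fact and is arguably cleaner. For part~(2) the paper makes a nontrivial construction: it finds a Borel $B'$ of $G$ with $B'\cap H=B_H$ for which $\nu$ is already $G$-dominant, and then uses $\langle\rho,\,\cdot\,\rangle$ to produce a strict inequality $\langle\rho,\nu'_{\dom}\rangle>\langle\rho,\nu\rangle$. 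Your norm argument with a $W_G$-invariant form avoids this construction entirely and is more elementary. The trade-off is exactly the one you flag: both of your steps rely on $W_H\subseteq W_G$ acting on a common cocharacter space, and in particular the identity $B(\nu_b^H,\alpha_i^\vee)=\tfrac{1}{2}B(\alpha_i^\vee,\alpha_i^\vee)\langle\alpha_i,\nu_b^H\rangle$ needs $s_{\alpha_i}$ to preserve $B$, which is automatic only when the reflections of $H$ lie in $W_G$. The paper's argument, by contrast, is set up to go through for an arbitrary quasi-split inclusion $H\subseteq G$ (after passing to the split case), at the cost of the extra Borel-construction step. For the application to Theorem~\ref{thmmain} your Levi-case proof suffices.
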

\begin{proof}
We choose maximal $F$-tori $T_H\subset T$ of $H$ and $G$ and Borel subgroups $B_H\subset B$ containing them, all defined over $F$.

From $\kappa_{H}(b)=\kappa_{H}(b')$ and functoriality of the Kottwitz map we obtain $\kappa_{G}(b)=\kappa_{G}(b')$. For (1) it is thus enough to show the following statement: Let $\nu,\nu'\in X_*(T)_{\Q}$ be $H$-dominant and $\nu'-\nu$ a non-negative rational linear combination of positive coroots of $H$.  Let $\nu_{\dom},\nu'_{\dom}$ be the $G$-dominant representatives in the Weyl group orbits of $\nu,\nu'$. Then $\nu'_{\dom}-\nu_{\dom}$ is a non-negative rational linear combination of positive coroots of $G$. For (2) we have to show an analogous statement replacing \emph{non-negative} by \emph{non-trivial non-negative}, by which we mean that all coefficients of the positive coroots are non-negative and at least one coefficient is non-zero. Multiplying by a suitable positive integer we may assume that $\nu,\nu'\in X_*(T)_{H-\dom}$ are integral and that the above difference is a non-negative integral linear combination of coroots of $H$. Since these assertions do not depend on any Galois action any more, we may pass to an algebraic closure of $F$ and may thus assume that $H$ and $G$ are split.
 
Let $K$ be a hyperspecial maximal compact subgroup of $G$ and $K_H=K\cap H$. The condition that $\nu'-\nu$ a non-negative rational linear combination of positive coroots of $H$ is equivalent to $K_H\nu(t)K_H\subset \overline{K_H\nu'(t)K_H}$. In particular, $K\nu_{\dom}(t)K\cap \overline{K\nu'_{\dom}(t)K}\neq \emptyset$. Since $K\nu_{\dom}(t)K$ is an orbit for the $K\times K$-action and $\overline{K\nu'_{\dom}(t)K}$ is invariant under this action, this implies $K\nu_{\dom}(t)K\subset \overline{K\nu'_{\dom}(t)K}$. This last condition is again equivalent to the condition that $\nu'_{\dom}-\nu_{\dom}$ is a non-negative integral linear combination of positive coroots of $G$, which implies (1).

For (2) we have to show that $\nu_{\dom}\neq\nu'_{\dom}$ for the $G$-dominant representatives of the Weyl group orbits of $\nu$ and $\nu'$. 

 {\it Claim.} There is a Borel subgroup $B'$ of $G$ with $B'\cap H=B_H$ and such that $\nu$ is $G$-dominant with respect to $B'$. 

To prove the claim let $M'$ be the centralizer of $\nu$, a Levi subgroup containing $T$. Let $W^{M'}$ be the subset of elements of $W$ that are shortest representatives of their $W_{M'}$-cosets. Let $w\in W^{M'}$ be such that $w(\nu)$ is dominant with respect to $B$. Let $B'=w^{-1}Bw$. Then $\nu$ is dominant with respect to $B'$. It remains to consider the intersection with $H$. Let $\alpha\in \Phi^+_H$, the set of roots in $B_H$, or equivalently in $B\cap H=B'\cap H$. Assume first that $\langle \alpha,\nu\rangle>0$. Then since $\nu$ is also dominant with respect to $B'$, the root $\alpha$ lies in $B'$. Let now $\alpha\in \Phi_H^+$ with $\langle \alpha,\nu\rangle=0$. Then $\alpha$ is a positive root in $M'\cap H$. Since $w\in W^{M'}$, the conjugate $w(\alpha)$ is again a positive root, hence in $B$. In particular, $\alpha$ is a root in $B'$, which finishes the proof of the claim.

We replace $B$ by $B'$ and may thus assume that $\nu$ is dominant. As usual let $\rho$ denote the half-sum of the positive roots of $G$, which agrees with the sum of all fundamental weights. Recall that $\nu'-\nu\neq 0$ is a non-trivial non-negative linear combination of positive coroots of $H$ and thus also of $G$. Using the second interpretation for $\rho$ we see that $\langle\rho, \nu'\rangle>\langle\rho,\nu\rangle$. Using the first interpretation we obtain that 
\begin{eqnarray*}
\langle 2\rho, \nu'_{\dom}\rangle&=&\sum_{\alpha\in\Phi^+}\langle\alpha,\nu'_{\dom}\rangle=\frac{1}{2}\sum_{\alpha\in\Phi}|\langle\alpha,\nu'_{\dom}\rangle|=\frac{1}{2}\sum_{\alpha\in\Phi}|\langle\alpha,\nu'\rangle|\\
&=&\sum_{\alpha\in\Phi^+}|\langle\alpha,\nu'\rangle|\geq\sum_{\alpha\in\Phi^+}\langle\alpha,\nu'\rangle=\langle 2\rho, \nu'\rangle.
\end{eqnarray*}
Altogether we find $\langle\rho, \nu'_{\dom}\rangle>\langle\rho,\nu\rangle$, thus in particular $\nu'_{\dom}\neq \nu$.
\end{proof}

From \cite{GoertzHeNie}, Theorem 3.3.1 we obtain
\begin{cor}\label{cor331}
Let $x\in\widetilde W$ be a normalized $(J,w,\delta)$-alcove element. Then every element of $IxI$ is $I$-$\sigma$-conjugate to an element of $I_Mx\sigma(I_M)$, which is in turn $w^{-1}$-$\sigma$-conjugate to $I_{J}\tilde xI_J$. In particular, the restriction of $B(\iota)$ composed with $w$-$\sigma$-conjugation induces a surjection $B(M_J)_{\tilde x}\rightarrow B(G)_x$.\qed
\end{cor}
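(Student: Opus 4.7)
The plan is to combine Theorem 3.3.1 of \cite{GoertzHeNie} with Construction \ref{remtransform}, and then chase representatives through the resulting isomorphism.

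For the first assertion, I would simply invoke \cite{GoertzHeNie}, Theorem 3.3.1 applied to the $(J,w,\delta)$-alcove $x\a$: it says verbatim that every element of $IxI$ is $I$-$\sigma$-conjugate to an element of $I_Mx\sigma(I_M)$. For the second assertion, the required isomorphism $I_Mx\sigma(I_M)\overset{\cong}{\longrightarrow} I_J\tilde xI_J$ by $w^{-1}$-$\sigma$-conjugation has already been set up in Construction \ref{remtransform}: because $x$ is \emph{normalized}, one has $M_J\cap{}^{w^{-1}}I=I_J$, and applying $\sigma$ to this identity and using $\sigma$-invariance of both $M_J$ and $I_J$ yields $M_J\cap{}^{\delta(w)^{-1}}I=I_J$ as well. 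Plugging these two equalities into the conjugation formula of Construction \ref{remtransform} gives exactly $I_J\tilde xI_J$ on the right-hand side.

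For the surjection $B(M_J)_{\tilde x}\twoheadrightarrow B(G)_x$, I take any $[b]_G\in B(G)_x$, pick $g\in[b]_G\cap IxI$, and use the first two parts to produce an element $\tilde g\in I_J\tilde xI_J\subseteq M_J(L)$ that is $G(L)$-$\sigma$-conjugate to $g$. Then $\tilde g$ defines an $M_J$-$\sigma$-conjugacy class lying in $B(M_J)_{\tilde x}$, and since $w^{-1}$-$\sigma$-conjugation inside $G(L)$ is an instance of ordinary $\sigma$-conjugation, functoriality gives $B(\iota)([\tilde g]_{M_J})=[\tilde g]_G=[g]_G=[b]_G$, which establishes surjectivity.

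No step is a real obstacle; the only point requiring care is keeping track of the two conjugations by $w^{-1}$ and $\sigma(w)^{-1}=\delta(w)^{-1}$ that appear in $w^{-1}\cdot I_Mx\sigma(I_M)\cdot\sigma(w)$, and the normalization hypothesis is exactly what is needed to collapse both factors to $I_J$.
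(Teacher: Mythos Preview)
Your proposal is correct and matches the paper's approach exactly: the paper simply writes ``From \cite{GoertzHeNie}, Theorem 3.3.1 we obtain'' and closes with \qed, and you have correctly unpacked the two ingredients (Theorem 3.3.1 of \cite{GoertzHeNie} for the $I$-$\sigma$-conjugation into $I_Mx\sigma(I_M)$, and Construction \ref{remtransform} for the identification with $I_J\tilde xI_J$ under the normalization hypothesis). The chase of representatives you give for the surjectivity is the intended one.
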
 
\begin{ex}\label{exnew}
The map of the preceeding corollary is still not injective in general. For example let $G=\GL_4$ and $M\cong \GL_2\times \GL_2$ the subgroup of block diagonal matrices of size $(2,2)$. We choose $B$ to be the Borel subgroup of lower triangular matrices. Let $\tilde x\in \widetilde W_{J}$ be such that the image in the affine Weyl group of each factor $\GL_2$ is of the form $x_0=t^{(1,0)}s$ where $s$ is the non-trivial element in $W_{\GL_2}=S_2$. Let $\mu=(0,1)$. Then $x_0\in W_{\GL_2}t^{\mu}W_{\GL_2}$ implies that every $[b]\in B(\GL_2)_{x_0}$ satisfies $[b]\leq [\mu]$. We have $x_0\delta(x_0)=t^{(1,1)}$, hence $\nu_{x_0}=(\tfrac{1}{2},\tfrac{1}{2})$. Since $\mu\leq x_0$, the formula for the generic $\sigma$-conjugacy class in $Ix_0 I$ in \cite{trunc1} shows that it is equal to $[\mu]$. Thus $B(\GL_2)_{x_0}=\{[x_0],[\mu]\}$. Let $w=(14)\in W_G$. Then one can check  that the corresponding element $x=w\tilde x\delta(w^{-1})$ is a $(J=(s_1,s_3),w,\delta)$-alcove element with $B(M)_{\tilde x}$ consisting of all four pairs of elements of $B(\GL_2)_{x_0}$, and $B(G)_x$ consisting of the three elements that are obtained by identifying $([x_0],[\mu])$ and $([\mu],[x_0])$. 
\end{ex}

\section{The basic locus and the proof of Theorem \ref{thmmain}(1)}\label{secbasic}
Let $x\in \widetilde W$. Then there is a unique basic element $[b_{0,x}]\in B(G)$ with $\kappa_G(b_{0,x})=\kappa_G(x)$. It agrees with the unique minimal element of $B(G)$ mapping to $\kappa_G(x)$ under $\kappa_G$. If $\mathcal N_{x,[b_{0,x}]}\neq \emptyset$, it is therefore the unique minimal element of $B(G)_x$. By Theorem A of \cite{GoertzHeNie},  $\mathcal N_{x,[b_{0,x}]}= \emptyset$ if and only if there is a pair $(J,w)$ such that $x\a$ is a $(J,w,\delta)$-alcove and $\kappa_{M_J}(w^{-1}x\delta(w))\notin \kappa_{M_J}([b_{0,x}]\cap M_J(L))$.

In \cite{HeAnnals}, Corollary 12.2 (see also \cite{HeSurvey}, Theorem 5.3 and the corresponding footnote for the generalization to non-split $G$), He proves that if $x$ is in the shrunken Weyl chambers and $\mathcal N_{x,[b_{0,x}]}\neq \emptyset$, then 
\begin{equation}\label{eqbasic}
\dim X_x(b_{0,x})=d_x(b_{0,x}).
\end{equation}

Thus if $[b_{0,x}]\in B(G)_x$, all assertions of Theorem \ref{thmmain} are shown. We now return to the general case.

\begin{proof}[Proof of Theorem \ref{thmmain}(1)]
We choose $J,w$ as in the theorem. We may further choose $w$ such that $x\a$ is a normalized $(J,w,\delta)$-alcove, compare Construction \ref{remtransform}. By Corollary \ref{cor331}, $B(G)_x$ is the image of $B(M_J)_{\tilde x}$ in $B(G)$. By Lemma \ref{lemmin} and the results we just recalled, we obtain that $B(M_J)_{\tilde x}$ has a unique minimal element for the partial order on $B(M_J)$, which is the unique basic element $[b_{0,\tilde x}]$ of $B(M_J)$ with $\kappa_{M_J}([b_{0,\tilde x}])=\kappa_{M_J}(\tilde x)$. By Theorem \ref{lemcomporder} its image in $B(G)$ is the unique minimal element of $B(G)_x$.
\end{proof}

\section{Dimension}\label{sec7}
\forget{\subsection{The dimension of bounded and admissible subsets of loop groups}\label{secdim}
\begin{definition}
Let $X$ be a nonempty subscheme of $LG$ that is bounded (i.e.~contained in a finite union of double cosets $IxI$) and admissible (i.e.~invariant under right multiplication by some open subgroup of $I$). Notice that using the usual notion of dimension of schemes, admissibility implies that the dimension of $X$ is infinite. We define instead the dimension of $X$ as follows. Let $I'$ be an open subgroup of $I$ with $XI'=X$. Then let
\begin{equation*}
\dim X= \dim X/I' - \dim I/I'
\end{equation*}
where on the right hand side of this equation we use the usual dimension of schemes, which is finite by boundedness of $X$ resp.~openness of $I'$. We use analogous definitions replacing $G$ by a subgroup such as $M_J$ for some $J$. 

If $X\subset Y$ is a subscheme of an irreducible subscheme of $LG$, and both are bounded and admissible, we call $\codim(X\subset Y)=\dim Y-\dim X$ the codimension of $X$ in $Y$.
\end{definition}
\begin{remark}
\begin{enumerate}
\item $\dim X$ is invariant under left multiplication of $X$ by elements of $G(L)$.

\item $\dim X\in \Z$.

\item For $X=I$ we may choose $I'=I$ and obtain $\dim I=0$. More generally let $x\in \widetilde W$. Then $\dim IxI=\ell(x)$ where $\ell(x)$ is the length of $x$.

\item Newton strata $\mathcal N_{x,[b]}=[b]\cap IxI$ are bounded and admissible (\cite{VWu}, Cor. 2.4). 
\end{enumerate}
\end{remark}

\subsection{Comparison to the dimension of affine Deligne-Lusztig varieties}\label{secfoliat}

In the same way as for the computation of the minimal element of $B(G)_x$ we want to reduce the calculation of dimensions to a calculation in a suitable Levi subgroup. For this we need to compare the right hand sides of \eqref{eq4}.
\begin{prop}\label{propcompcodim}
Let $x\a$ be a normalized $(J,w,\delta)$-alcove and let $b\in I_J\tilde x I_J$. Denote by $\codim(\mathcal{N}^G_{[b],x}\subset IxI)$ the codimension of the Newton stratum of $[b]_G$ in $IxI$ and by $\codim(\mathcal{N}^{M_J}_{[b],\tilde x}\subset I_J\tilde x I_J)$ the codimension of the Newton stratum of $[b]_{M_J}$ in $I_J\tilde x I_J$. Then $$\codim(\mathcal{N}^G_{[b],x}\subset IxI)=\codim(\mathcal{N}^{M_J}_{[b],\tilde x}\subset I_J\tilde x I_J).$$
\end{prop}
\begin{proof}
The Newton stratification satisfies a strong version of purity. Indeed, this is shown in \cite{grothconj}, Theorem 1 for $G$ split and $F$ a local function field, and by \cite{HamAP}, Proposition 1 for the generalization to non-split groups, but in the arithmetic case. The proof of these results also proves strong purity in the function field case we consider here. Strong purity implies that both codimensions of the assertion are equal to the maximal length $n$ of a chain of pairwise different elements $[b_i]$ of $B(G)$ resp. $B(M_J)$ with $[b_0]=[b]$ and such that there are points $x_0,\dotsc, x_n$ such that for all $i$ the point $x_i$ is a specialization of $x_{i+1}$, and such that $x_i$ is contained in the Newton stratum of $[b_i]$ in $IxI$ resp. $I_J\tilde x I_J$. In other words: of an element of a set $S_{wh\sigma(w)^{-1}}$ resp.~$S_{h, I_J}$ from Corollary \ref{corcompchain} for some $h\in \mathcal{N}^{M_J}_{[b],\tilde x}$. The corollary thus implies the proposition.
\end{proof}
}

Let $[b]\in B(G)_x$, and $\nu_b^G$ its $G$-dominant Newton point. Recall that
\begin{eqnarray}
\nonumber\Delta^G_x(b)&:=&d_x^G(b)-\dim X_x(b)\\
\label{eq5}&=&\frac{1}{2}(\ell(x)+\ell(\eta(x))-\defect_G b)-\langle\rho, \nu_b^G\rangle-\dim X_x(b).
\end{eqnarray}

\begin{prop}\label{propprop(1)}
Let $x\a$ be a normalized $(J,w,\delta)$-alcove.
\begin{enumerate}
\item Let $b\in I_J\tilde xI_J$  and denote its $M_J$-dominant Newton point by $\nu_b$. Then
$$\ell_G(x)-\ell_{M_J}(\tilde x)=\langle 2(\rho_G-\rho_{M_J}),\nu_b\rangle.$$
\item If $b\in I_J\tilde xI_J$ is basic in $M_J$ then $\nu_b$ is $G$-dominant.
\end{enumerate}
\end{prop}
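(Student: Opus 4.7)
The plan is to unfold the $(J,w,\delta)$-alcove condition into a concrete inequality and then match it against the standard length formula $\ell(t^\eta u) = \sum_{\alpha \in \Phi^+}|\langle\alpha,\eta\rangle - \mathbf{1}_{u^{-1}\alpha \in \Phi^-}|$ on $\widetilde W$. Writing $x = t^\lambda v$ and $\tilde x = t^\mu \tilde v$ with $\mu = w^{-1}\lambda$ and $\tilde v = w^{-1}v\delta(w) \in W_J$, a direct computation with affine root groups shows that for $\alpha \in w(\Phi^+ - \Phi_J^+)$ the condition $U_\alpha \cap {}^xI \subseteq U_\alpha \cap I$ is equivalent to the inequality $\langle\alpha,\lambda\rangle \geq \mathbf{1}_{v^{-1}\alpha \in \Phi^-} - \mathbf{1}_{\alpha \in \Phi^-}$, which I will refer to as $(\ast)$.

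For (1), I would split $\ell_G(x) = \sum_{\beta \in \Phi^+} N_\beta(x)$ into the part indexed by $w\Phi_J^+$ and its complement. Minimality of $w$ in $wW_J$ gives $w\Phi_J^+ \subseteq \Phi^+$, and the analogous property of $\delta(w)$ lets me match the first part term-by-term (via $\alpha \leftrightarrow w\alpha$) with $\ell_{M_J}(\tilde x)$. The remaining sum is indexed by $\beta \in \Phi^+$ with either $\beta \in w(\Phi^+-\Phi_J^+)$ or $-\beta \in w(\Phi^+-\Phi_J^+)$; in each case $(\ast)$, applied at $\pm\beta$, fixes the sign of $\langle\beta,\lambda\rangle - \mathbf{1}_{v^{-1}\beta \in \Phi^-}$ so that the absolute value drops. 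After collecting terms, one is left with the main contribution $\sum_{\beta \in w(\Phi^+-\Phi_J^+)}\langle\beta,\lambda\rangle = \langle 2(\rho_G - \rho_{M_J}),\mu\rangle$ plus the two integer corrections $\#(w(\Phi^+-\Phi_J^+)\cap\Phi^-)$ and $\sum_{\beta \in w(\Phi^+-\Phi_J^+)}\mathbf{1}_{v^{-1}\beta \in \Phi^-}$. Both equal $\ell(w)$: the first directly from $w\Phi_J^+ \subseteq \Phi^+$; the second, after the substitution $\alpha'' = \tilde v^{-1}\alpha'$ (a permutation of $\Phi^+-\Phi_J^+$ since $\tilde v \in W_J$), becomes $\ell(\delta(w)) = \ell(w)$. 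They cancel, leaving $\ell_G(x) - \ell_{M_J}(\tilde x) = \langle 2(\rho_G - \rho_{M_J}),\mu\rangle$. Finally $2(\rho_G - \rho_{M_J})$ is $W_{M_J}$-invariant, as the sum over the $W_{M_J}$-stable set $\Phi^+ - \Phi_J^+$, hence pairs trivially with coroots of $M_J$; since $\nu_b$ and $\mu$ both represent $\kappa_{M_J}(\tilde x)$, one may freely replace $\mu$ by $\nu_b$.

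For (2), $\nu_b$ is central in $M_J$, hence equal to the canonical $(W_{M_J}\rtimes\langle\delta\rangle)$-invariant projection of $\mu$. Centrality gives $\langle\alpha,\nu_b\rangle = 0$ for $\alpha \in \Phi_J^+$; for $\alpha \in \Phi^+-\Phi_J^+$ one has $\langle\alpha,\nu_b\rangle = |\tilde O|^{-1}\sum_{\alpha' \in \tilde O}\langle\alpha',\mu\rangle$, where $\tilde O$ is the $(W_{M_J}\rtimes\langle\delta\rangle)$-orbit of $\alpha$. Summing $(\ast)$ over $\alpha' \in \tilde O$ and reusing the $\tilde v$-substitution from (1) reduces the lower bound to the difference of two counts on $\tilde O$ (those $\alpha'$ with $w\alpha' \in \Phi^-$ versus those $\alpha''$ with $\delta(w)\alpha'' \in \Phi^-$); these agree because $\tilde O$ is $\delta$-stable and $\delta$ preserves $\Phi^\pm$. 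Hence $\sum_{\alpha' \in \tilde O}\langle\alpha',\mu\rangle \geq 0$, giving $\langle\alpha,\nu_b\rangle \geq 0$ and thus $G$-dominance of $\nu_b$.

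The main obstacle I expect is establishing the cancellation of these two $\ell(w)$-type correction terms, particularly its orbit-wise refinement needed in (2): the argument hinges on the interplay between minimality of $w$ in $wW_J$, the fact that $\tilde v \in W_J$ permutes $\Phi^+-\Phi_J^+$, and $\delta$ preserving positivity of roots. Once this combinatorial balance is pinned down, the rest is routine manipulation of sums indexed by positive roots.
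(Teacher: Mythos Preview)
Your proof is correct and takes a genuinely different route from the paper's.

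\textbf{What you do differently.} The paper computes $\ell_G(x)=\dim I/(I\cap{}^xI)$ geometrically, splitting the root directions into $w(\Phi_J)$ and $w(\Phi^+-\Phi_J^+)$. The first block is identified with $\ell_{M_J}(\tilde x)$ via the normalization identity $M_J\cap{}^{w^{-1}}I=I_J$ (and its $\sigma$-twist); the second block is massaged by successive conjugations (by $w^{-1}$, then absorbing the $W_J$-part of $\tilde x$) until it becomes $\sum_{\alpha\in\Phi^+-\Phi_J^+}\langle\alpha,\mu\rangle$ with $\mu$ the $M_J$-dominant translation part. You instead unfold everything through the explicit length formula and the pointwise inequality $(\ast)$, obtaining the same main term plus two integer corrections which you show both equal $\ell(w)$ and cancel. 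For part~(2), the paper enlarges $J$ to $J'=J\cup\{\delta^i\alpha_0\}$ and recognises $\langle\rho_{M'}-\rho_{M_J},\nu_b\rangle$ as a nonnegative $S_2$-type sum; you instead average $(\ast)$ over a $(W_{M_J}\rtimes\langle\delta\rangle)$-orbit and exhibit the lower bound as a difference of two counts which agree by $\delta$-stability of the orbit.

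\textbf{What each approach buys.} The paper's argument is coordinate-free and never needs to track the two $\ell(w)$ corrections: the conjugation by $w^{-1}$ swallows them. It also makes the role of the normalization hypothesis (namely $M_J\cap{}^{w^{-1}}I=I_J$) very transparent. Your argument is entirely elementary---no subgroup dimensions, only sums over roots---and your part~(2) yields dominance root by root rather than fundamental-weight by fundamental-weight. The cancellation you flag as the ``main obstacle'' is exactly where the two proofs diverge: the paper sidesteps it, you meet it head-on via the substitution $\alpha''=\tilde v^{-1}\alpha'$ and the observation $\ell(\delta(w))=\ell(w)$. One small point worth making explicit in a write-up: your $\mu=w^{-1}\lambda$ is the translation part of $\tilde x$, not its $M_J$-dominant conjugate (which is the paper's $\mu$); the two differ by a $W_J$-action, and your final $W_{M_J}$-invariance argument for $2(\rho_G-\rho_{M_J})$ covers this as well as the passage to $\nu_b$.
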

Notice that in general $\nu_b$ need not be $G$-dominant, compare Example \ref{exnew}.

\begin{proof}
By the defining property of $(J,w,\delta)$-alcoves we have that
\begin{align*}
\ell_G(x)&=\dim I/I\cap xIx^{-1}\\
&=\sum_{\alpha\in w(\Phi_J)}\dim (U_{\alpha}\cap I)/(U_{\alpha}\cap I\cap {}^xI)+\sum_{\alpha\in w(\Phi^+-\Phi_J^+)}\dim (U_{\alpha}\cap I/U_{\alpha}\cap {}^xI).
\end{align*}
We denote the two sums by $S_1$ and $S_2$, and first consider $S_1$.

Since $x\a$ is a normalized $(J,w,\delta)$-alcove, we have $M_J\cap I=M_J\cap {}^{w^{-1}}I$. Using the first description, this group is $\sigma$-invariant, thus we also have $M_J\cap I=M_J\cap {}^{\delta(w)^{-1}}I$. We conjugate this equality by $\tilde x=w^{-1}x\delta(w)\in M_J$, and obtain
$$M_J\cap {}^{\tilde x}I={}^{\tilde x}(M_J\cap I)={}^{\tilde x}(M_J\cap{}^{\delta(w)^{-1}}I)=M_J\cap {}^{\tilde x\delta(w)^{-1}}I=M_J\cap {}^{w^{-1}x}I.$$ 
We have 
\begin{align*}
S_1&=\dim ({}^wM_J\cap I)/ ({}^wM_J\cap I\cap{}^xI)=\dim (M_J\cap {}^{w^{-1}}I)/(M_J\cap {}^{w^{-1}}I\cap{}^{w^{-1}x}I)\\
\intertext{and using the above equalities this is}
&=\dim (M_J\cap I)/(\dim M_J\cap I\cap{}^{\tilde x}I)=\ell_{M_J}(\tilde x).
\end{align*}
For the first assertion it remains to show that $S_2=\langle 2(\rho_G-\rho_{M_J}),\nu_b\rangle.$ 

{\it Claim.} Let  $\mu\in X_*(T)_{M_J-\dom}$ with $\tilde x\in W_Jt^{\mu} W_J$. Then $\langle 2(\rho_G-\rho_{M_J}),\nu_b\rangle=\langle 2(\rho_G-\rho_{M_J}),\mu\rangle.$

From $b\in M_J(\mathcal O_L)\mu (t)M_J(\mathcal O_L)$ we obtain $\nu_b\preceq_{M_J} \mu $, i.e.~$\mu -\nu_b$ is a linear combination of coroots of $T$ in $M_J$ (in fact a non-negative linear combination of positive coroots). Let $\alpha^{\vee}$ be a coroot of $T$ in $M_J$. Then  $\langle \rho_G,\alpha^{\vee}\rangle=1$ and analogously for $\rho_{M_J}$. The claim follows.

By definition of $S_2$ we have
\begin{align*}
S_2&=\sum_{\alpha\in w(\Phi^+-\Phi_J^+)}\dim (U_{\alpha}\cap I/U_{\alpha}\cap {}^xI),\\
\intertext{and conjugating by $w^{-1}$ this is}
&=\sum_{\alpha\in \Phi^+-\Phi_J^+}\dim (U_{\alpha}\cap {}^{w^{-1}}I/U_{\alpha}\cap {}^{w^{-1}x}I)\\
&=\dim N_J\cap {}^{w^{-1}}I-\dim N_J\cap {}^{w^{-1}x}I.\\
\intertext{Since $N_J$, $I$, $I\cap N_J$, and hence also the dimension of subsets of $N_J$ are invariant under $\sigma$, this is}
&=\dim N_J\cap {}^{\delta(w^{-1})}I-\dim N_J\cap {}^{w^{-1}x\delta(w)\delta(w^{-1})}I\\
&=\dim N_J\cap {}^{\delta(w^{-1})}I-\dim N_J\cap {}^{\tilde x\delta(w^{-1})}I.\\
\intertext{We now decompose $\tilde x=w_1{}^{w_2}\mu$ with $w_1,w_2\in W_J$ and $\mu\in X_*(T)_{M_J-\dom}$. We use that $N_J={}^{w_1}N_J$, and furthermore that $N_J\cap I$ (and hence the notion of dimensions for subsets of $N_J$) is invariant under conjugation by $w_1$. We obtain}
&=\dim N_J\cap {}^{\delta(w^{-1})}I-\dim N_J\cap {}^{{}^{w_2}\mu\delta(w^{-1})}I\\
&=\sum_{\alpha\in \Phi^+-\Phi_J^+} \langle \alpha,{}^{w_2}\mu\rangle.\\
\intertext{The element $w_2\in W_J$ induces a permutation of $\Phi^+-\Phi_J^+$, thus this is}
&=\sum_{\alpha\in \Phi^+-\Phi_J^+} \langle \alpha,\mu\rangle.\\
&=\langle 2(\rho_G-\rho_{M_J}),\mu\rangle.
\end{align*}
Together with the claim this finishes the proof of the first assertion.

For the second assertion we assume that $\nu_b$ is central in $M_J$. Let $\alpha_0\in \mathbb S - J$ and let $J\cup \{\delta^ i(\alpha_0)\mid i\in\Z\}=J'\subset \mathbb S$ and $M'\supset M_J$ the corresponding Levi subgroup of $G$. We have to show that $\langle \alpha_0, \nu_b\rangle\geq 0$. Since $\langle \alpha, \nu_b^{M_J}\rangle= 0$ for all roots $\alpha$ in $M_J$ and since $\nu_b$ is $\delta$-invariant, this is equivalent to $\langle \rho_{M'}-\rho_{M_J}, \nu_b^{M_J}\rangle\geq 0$. By an analog of the calculation of $S_2$ above this expression is equal to $$\sum_{\alpha\in w(\Phi_{J'}^+-\Phi_J^+)}\dim (U_{\alpha}\cap I)-\dim (U_{\alpha}\cap {}^xI).$$ This in its turn is non-negative by the assumption that $x\a$ is a $(J,w,\delta)$-alcove.
\end{proof}

\begin{lemma}\label{lemlast}
Let $x\a$ be a normalized $(J,w,\delta)$-alcove in the regular shrunken Weyl chambers. Then
$$\ell_G(\eta_G(x))=\ell_{M_J}(\eta_{M_J}(\tilde x)).$$ 
\end{lemma}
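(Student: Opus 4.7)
The plan is to reduce everything to the identity $v_1^{-1}w \in W_J$, where $x = v_1\,t^{\mu}\,v_2$ is the (unique, since $W_\mu = \{1\}$) decomposition with $\mu$ regular $G$-dominant. Once this identity is in hand, the $M_J$-canonical decomposition of $\tilde x$ can be read off directly, and the two elements $\eta_G(x)$ and $\eta_{M_J}(\tilde x)$ turn out to coincide as elements of $W_J \subseteq W$; the length equality then follows because the length function on the Coxeter subgroup $W_J$ is the restriction of the length function on $W$.

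The main obstacle is showing $v_1^{-1}w \in W_J$. For this I would fix an arbitrary $\beta \in \Phi^+ - \Phi_J^+$ and consider $\alpha := w\beta \in w(\Phi^+-\Phi_J^+)$, together with its pullback $\gamma := v_1^{-1}\alpha = v_1^{-1}w\beta$. Writing $x = t^{v_1\mu}\,(v_1v_2)$ and expressing $U_\alpha \cap {}^xI$ via valuations, the $(J,w,\delta)$-alcove inclusion $U_\alpha\cap {}^xI \subseteq U_\alpha\cap I$ becomes one of four lower bounds on $\langle\gamma,\mu\rangle$, depending on the signs of $v_1\gamma = w\beta$ and of $v_2^{-1}\gamma = (v_1v_2)^{-1}w\beta$. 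The shrunken condition sharpens each of these into a strict inequality. Since $\mu$ is regular $G$-dominant, $\langle\gamma,\mu\rangle$ has the same sign as $\gamma$ and is nonzero; a case-by-case check then rules out $\gamma < 0$ in each of the four sign configurations. Hence $\gamma > 0$ for every $\beta \in \Phi^+ - \Phi_J^+$, so the inversion set of $v_1^{-1}w$ is contained in $\Phi_J^+$, which by the standard characterization of $W_J$ gives $v_1^{-1}w \in W_J$.

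Granting this, the remaining computation is algebraic. Set $a := w^{-1}v_1 \in W_J$ and $b := v_2\,\delta(w)$, so that
$$\tilde x = w^{-1}\,x\,\delta(w) = (w^{-1}v_1)\,t^\mu\,(v_2\delta(w)) = a\,t^\mu\,b.$$
From $a \in W_J$ and $\tilde x \in \widetilde W_J$ one reads off $b \in W_J$. Since $\mu$ is $G$-dominant hence $M_J$-dominant, and $G$-regular so that the condition $\bar v_2 \in {}^{\bar\mu}W_J$ is vacuous, the decomposition $\tilde x = a\,t^\mu\,b$ is already the $M_J$-canonical form, with $\bar v_1 = a$, $\bar\mu = \mu$, $\bar v_2 = b$. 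Therefore
$$\eta_{M_J}(\tilde x) = \delta^{-1}(b)\,a = \delta^{-1}\bigl(v_2\,\delta(w)\bigr)\,w^{-1}\,v_1 = \delta^{-1}(v_2)\,w\,w^{-1}\,v_1 = \delta^{-1}(v_2)\,v_1 = \eta_G(x),$$
an equality in $W$. The common value lies in $W_J$, so $\ell_G(\eta_G(x)) = \ell_{M_J}(\eta_{M_J}(\tilde x))$, as required.
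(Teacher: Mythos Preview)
Your proof is correct and follows essentially the same route as the paper's. Both arguments reduce to showing $w^{-1}v_1\in W_J$ (equivalently, that the $G$-dominant $\mu$ already serves as the $M_J$-dominant cocharacter in the decomposition of $\tilde x$), and both establish this via the valuation inequality coming from the strict inclusion $U_{w\beta}\cap{}^xI\subsetneq U_{w\beta}\cap I$ for $\beta\in\Phi^+-\Phi_J^+$; the paper packages this as $\langle\beta,\tilde\mu\rangle\geq 0$ after conjugating by $w^{-1}$, while you carry out the equivalent four-case sign analysis directly in $G$, but the content is identical.
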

\begin{proof}
We want to show the two equalities
\begin{equation}\label{eqlemlast0}
\ell_G(\eta_G(x))=\ell_{G}(\eta_G(\tilde x))=\ell_{M_J}(\eta_{M_J}(\tilde x)).\end{equation}
We begin with the first. Our assumption on $x$ to be regular implies that there is a unique decomposition $x=v_1t^\mu v_2$ with $v_1,v_2\in W={}^{\mu}W$ and $\mu$ dominant, and hence that $\eta_G(x)=\sigma^{-1}(v_2)v_1$. Further, the same consideration implies that $\eta_G(w^{-1}x\sigma(w))=\sigma^{-1}(v_2\sigma(w))w^{-1}v_1=\eta_G(x)$, implying the first equality. For the second equality notice that the length function on $W_{M_J}$ is the restriction of the length function for $W_G$. Thus it is enough to show that $\eta_G(\tilde x)=\eta_{M_J}(\tilde x).$ 

Recall that $\tilde x=w^{-1}x\sigma(w)\in \widetilde W_J.$ The elements $\eta_G(\tilde x)$ resp.~$\eta_{M_J}(\tilde x)$ are computed via the unique decompositions $\tilde x=(w^{-1}v_1)t^{\mu}(v_2\sigma(w))$ and $\tilde x=\tilde w_1 t^{\mu_J}\tilde w_2$ where $w^{-1}v_1,v_2\sigma(w)\in W={}^{\mu}W$, $\mu$ is $G$-dominant, $\tilde w_i\in W_J$ and $\mu_J$ is $M_J$-dominant. We want to show that already these two decompositions coincide, i.e. that $\mu_J$ is $G$-dominant. For this it is enough to prove that if we write $\tilde x=t^{\tilde \mu}\tilde w$ with $\tilde w\in W_J$, then $\langle \beta,\tilde \mu\rangle \geq 0$ for all $\beta\in \Phi^+- \Phi^+_J$. 

Conjugation by $\tilde x$ (or rather by its image $\tilde w$ in the finite Weyl group) induces a permutation of $\Phi^+- \Phi^+_J$. Let $\alpha= w(\beta)\in \Phi$. Since $x$ is a $(J,w,\delta)$-alcove element and shrunken, we obtain $U_{\alpha}\cap {}^xI\subsetneq U_{\alpha}\cap I$. Conjugation by $w^{-1}$ yields 
\begin{equation}\label{eqlemlast}U_{\beta}\cap {}^{\tilde x\delta(w^{-1})}I\subsetneq U_{\beta}\cap {}^{w^{-1}}I.
\end{equation}
 Normalizing the $U_{\gamma}$ such that $U_{\gamma}\cap I\subseteq U_{\gamma}(\O_L)$ with equality if and only if $\gamma$ is negative, we obtain that the right hand side is contained in $U_{\beta}(\O_L)$. By the strict inclusion, the left hand side is contained in $U_{\beta}(t\O_L)$.  We write $U_{\beta}\cap {}^{\tilde x\delta(w^{-1})}I={}^{\tilde\mu(t)}(U_{\beta}\cap {}^{\tilde w\delta(w^{-1})}I)$. We have $U_{\beta}\cap {}^{\tilde w\delta(w^{-1})}I\supseteq U_{\beta}(t\O_L)$. Hence by \eqref{eqlemlast}, $\langle\beta,\tilde\mu\rangle\geq 0$ which is what we wanted.
\end{proof}

Altogether we have

\begin{thm} \label{thmend}
Let $x\a$ be a $(J,w,\delta)$-alcove in the regular shrunken Weyl chambers. Let $[b]_G\in B(G)_x$. Then for every $[b]_{M_J}\in B(M_J)_{\tilde x}$ mapping to $[b]_G$ and such that $\nu_b^{M_J}=\nu_b^G$ we have
$\Delta^G_x(b)= \Delta^{M_J}_{\tilde x}(b).$
\end{thm}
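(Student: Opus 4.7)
The plan is to expand both $\Delta^G_x(b)$ and $\Delta^{M_J}_{\tilde x}(b)$ term by term from definition \eqref{eq5} and match the pieces via the preceding results. Writing $\nu_b := \nu_b^G = \nu_b^{M_J}$ and taking the difference, I obtain
\begin{align*}
2\bigl(\Delta^G_x(b) - \Delta^{M_J}_{\tilde x}(b)\bigr) =~& \bigl(\ell_G(x) - \ell_{M_J}(\tilde x)\bigr) + \bigl(\ell_G(\eta_G(x)) - \ell_{M_J}(\eta_{M_J}(\tilde x))\bigr)\\
&\; - \bigl(\defect_G b - \defect_{M_J} b\bigr) - 2\langle\rho_G - \rho_{M_J}, \nu_b\rangle\\
&\; - 2\bigl(\dim X_x^G(b) - \dim X_{\tilde x}^{M_J}(b)\bigr).
\end{align*}
Proposition \ref{propprop(1)}(1), applied to $b \in I_J \tilde x I_J$, identifies the first summand with $2\langle\rho_G - \rho_{M_J}, \nu_b\rangle$, which cancels the fourth summand. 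The second summand vanishes by Lemma \ref{lemlast}. This reduces the theorem to the two remaining identities $\defect_G b = \defect_{M_J} b$ and $\dim X_x^G(b) = \dim X_{\tilde x}^{M_J}(b)$.

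The defect equality follows from the formula $\defect_H b = \dim \a - \dim \a^{w_b}$ recalled in Section \ref{secredGunr}, where $w_b$ denotes the finite-Weyl-group image of any representative of $[b]$ in the Iwahori-Weyl group. Since $b$ admits a representative in $\widetilde W_J \subseteq \widetilde W$, the element $w_b$ lies in $W_J \hookrightarrow W$ and acts on the common apartment $\a = X_*(T)_\R$ in the same way whether computed in $G$ or in $M_J$, so the two defects coincide.

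For the dimension equality, I would apply formula \eqref{eq4} in both $G$ and $M_J$ and subtract, obtaining
\begin{equation*}
\dim X_x^G(b) - \dim X_{\tilde x}^{M_J}(b) = \bigl(\ell_G(x) - \ell_{M_J}(\tilde x)\bigr) - 2\langle\rho_G - \rho_{M_J}, \nu_b\rangle - \bigl(\codim(\mathcal{N}^G_{[b],x} \subset IxI) - \codim(\mathcal{N}^{M_J}_{[b],\tilde x} \subset I_J \tilde x I_J)\bigr).
\end{equation*}
Proposition \ref{propprop(1)}(1) cancels the first two terms, so matters reduce to showing that the two Newton-stratum codimensions agree. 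This I would deduce from strong purity of the Newton stratification, which expresses each codimension as the maximal length of a chain of $\sigma$-conjugacy classes realized by a chain of specializations inside the ambient Bruhat cell. Corollary \ref{cor331} combined with Construction \ref{remtransform} provides a dictionary between such chains in $IxI$ and $I_J \tilde x I_J$ via the $w$-$\sigma$-conjugation isomorphism $I_M x \sigma(I_M) \cong I_J \tilde x I_J$, yielding the required equality.

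The main obstacle is precisely this last codimension comparison: the algebraic cancellations that eliminate the length and $\rho$-terms are purely formal, and the defect identity follows directly from Kottwitz's formula, but transferring a strong-purity chain argument between the $G$- and $M_J$-Newton stratifications requires that the $(J,w,\delta)$-alcove structure respect specializations of $\sigma$-conjugacy classes, which needs to be checked carefully using the surjection $B(M_J)_{\tilde x} \to B(G)_x$ from Corollary \ref{cor331} together with the compatibility provided by Theorem \ref{lemcomporder}.
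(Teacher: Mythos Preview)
Your term-by-term reduction is correct and matches the paper's structure exactly: Proposition~\ref{propprop(1)}(1) for the length/$\rho$ cancellation, Lemma~\ref{lemlast} for the $\eta$-term, and then the defect and dimension identities. Your defect argument via Kottwitz's formula is fine (the paper just says ``by definition of the defect'', but your version is equally valid).

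The divergence is in the last step. For $\dim X_x^G(b) = \dim X_{\tilde x}^{M_J}(b)$, the paper simply cites \cite{GHKR2}, Theorem~1.1.4(b), which gives this equality directly for $(J,w,\delta)$-alcove elements. You instead route through \eqref{eq4} to reduce to a Newton-stratum codimension comparison, and then sketch a strong-purity argument that you yourself flag as the ``main obstacle'' and leave incomplete. That detour is unnecessary: the dimension equality is already a theorem in the literature, and invoking it closes the proof immediately.

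Your codimension route can in principle be made to work --- it is essentially the argument the paper's acknowledgment alludes to as having been simplified by a referee --- but it requires a genuine geometric comparison of specialization chains between $IxI$ and $I_J\tilde xI_J$ (something like Theorem~\ref{thmorder} in the appendix), not just the set-theoretic surjection $B(M_J)_{\tilde x}\to B(G)_x$ of Corollary~\ref{cor331}. As written, the proposal has a gap there; replacing that paragraph with the citation to \cite{GHKR2} fixes it.
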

\begin{proof}
We have
\begin{itemize}
\item Proposition \ref{propprop(1)} to compare the first and fourth summand in \eqref{eq5} to the analogous summands for $M_J$ instead of $G$. The proposition gives the desired comparison under the additional assumption that the $M_J$-dominant Newton point $\nu_b$ of $b$ is equal to its $G$-dominant Newton point $\nu_b^G$ (which by (2) holds if $\nu_b$ is basic in $M_J$).
\item $\ell_G(\eta_G(x))=\ell_{M_J}(\eta_{M_J}(w^{-1}x\sigma(w)))$ for $x$ regular shrunken by Lemma \ref{lemlast}.
\item $\defect_G(b)=\defect_{M_J}(b)$ for all $b$ and all $x$, by definition of the defect.
\item $\dim(X^G_x(b))= \dim(X^{M_J}_{\tilde x}(b))$ by \cite{GHKR2}, Theorem 1.1.4(b).
\end{itemize}
Together with \eqref{eq5}, this implies the theorem.
\end{proof}
\begin{proof}[Proof of Theorem \ref{thmmain}]
We apply the above theorem to the case where $J,w$, and $[b]$ are chosen such that $[b]_{M_J}$ is basic in $M_J$. By \eqref{eqbasic} we have $\Delta^{M_J}_{\tilde x}(b)=0$. The theorem follows.
\end{proof}

\section*{Appendix - Comparing closure relations of Newton strata}
\setcounter{section}{8}
The aim of this appendix is to complement the other results of the paper by proving the following more general geometric version of Corollary \ref{cor331}. It gives a direct means to compare closures of Newton strata for Iwahori double cosets for $G$ and a Levi subgroup. We use the same notation as in the other sections. Recall from Construction \ref{remtransform} that if $x$ is normalized, then $I_MxI_M= wI_J\tilde x I_J\sigma(w^{-1})$.

\begin{thm}\label{thmorder}
Let $x\in \widetilde{W}$ such that $x\a$ is a normalized $(J,w,\delta)$-alcove. Let $b,b'\in I_J \tilde xI_J$. Then $b\in \overline{[b']_G\cap I_J\tilde xI_J}$ if and only if $wb\sigma(w^{-1})\in \overline{[b']_G\cap IxI}$ where the closure is taken in the respective Schubert cell. More precisely, assume that $wb\sigma(w^{-1})\in IxI$ is a specialization of a point $\tilde b_{\eta}\in ([b']_G\cap IxI)(\kappa(\tilde b_{\eta}))$ defined over some field $\kappa(\tilde b_{\eta})$, and let $\tilde C$ be the $I$-$\sigma$-conjugacy class of $\tilde b_{\eta}$. Then the closure of $\tilde C\cap I_MxI_M$ contains $wb\sigma(w^{-1})$.
\end{thm}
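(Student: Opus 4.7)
\emph{Plan.} First, by Construction \ref{remtransform}, the map $\phi_w\colon I_J\tilde xI_J\to I_MxI_M$, $y\mapsto wy\sigma(w^{-1})$, is an isomorphism of schemes, and since every $y$ is $G$-$\sigma$-conjugate to $\phi_w(y)$ via the element $w$, it identifies $[b']_G\cap I_J\tilde xI_J$ with $[b']_G\cap I_MxI_M$ and their closures. The theorem thus reduces to the analogous statement comparing the closure of $[b']_G\cap I_MxI_M$ in $I_MxI_M$ with the closure of $[b']_G\cap IxI$ in $IxI$. The forward direction of the biconditional is then immediate from the inclusion of schemes $I_MxI_M\hookrightarrow IxI$: any specialization taking place inside the subscheme is also a specialization in the ambient one.

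For the reverse direction together with the precise specialization statement, the plan is to upgrade the pointwise assertion of Corollary \ref{cor331} to a scheme-theoretic retraction
$$\pi\colon IxI\longrightarrow I_MxI_M$$
satisfying (a) for every point $y\in IxI$, the image $\pi(y)$ lies in the $I$-$\sigma$-conjugacy class of $y$, and (b) $\pi$ restricts to the identity on $I_MxI_M$. Once $\pi$ is a morphism, it is continuous, so the given specialization $\tilde b_\eta\leadsto wb\sigma(w^{-1})$ in $IxI$ induces a specialization $\pi(\tilde b_\eta)\leadsto\pi(wb\sigma(w^{-1}))=wb\sigma(w^{-1})$ inside $I_MxI_M$. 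By (a), $\pi(\tilde b_\eta)$ belongs to $\tilde C\cap I_MxI_M$, so $wb\sigma(w^{-1})$ lies in the closure of $\tilde C\cap I_MxI_M$ as required.

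The main obstacle is therefore the construction of $\pi$ as a morphism rather than a mere pointwise retraction. The proof of \cite{GoertzHeNie}, Theorem 3.3.1 is in fact algorithmic: for each $y\in IxI$ it produces an explicit $g\in I$ with $g^{-1}y\sigma(g)\in I_MxI_M$, built up from the Iwahori decomposition $I=I^-\cdot I_M\cdot I^+$ relative to the parabolic ${}^wP_J$ together with the normalized $(J,w,\delta)$-alcove condition $U_\alpha\cap {}^xI\subseteq U_\alpha\cap I$ for $\alpha\in w(\Phi^+-\Phi_J^+)$, which forces the $\sigma$-conjugation action of the unipotent factors $I^\pm$ on $IxI$ to be contracting in a precise sense. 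My plan is to verify that each root-by-root step of this procedure is given by polynomial operations and hence defines a morphism of schemes, and that the procedure stabilizes on the bounded locus $IxI$ after finitely many steps, yielding the desired $\pi$. Property (b) is automatic, since applied to $y\in I_MxI_M$ the algorithm returns $g=1$ and $\pi(y)=y$.
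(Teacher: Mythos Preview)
Your reduction via $\phi_w$ and the forward implication are fine, and the idea of following the algorithm of \cite{GoertzHeNie}, Theorem 3.3.1 is the right one. The gap is in your claim that the root-by-root steps are ``polynomial operations'' that assemble into a morphism $\pi\colon IxI\to I_MxI_M$.

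At each step of the GHN/GHKR procedure one has to solve, in a graded piece $N_r/N_{r^+}$ or $\overline N_r/\overline N_{r^+}$, an equation which in linear coordinates takes the form
\[
w - M\sigma(w)=v \qquad\text{or}\qquad \sigma(w)-Mw=v
\]
for some matrix $M$ and given $v$ (this is what \cite{GHKR2}, Lemma 5.1.1 is invoked for). These are Lang/Artin--Schreier type equations: the map $w\mapsto w-M\sigma(w)$ (and likewise $w\mapsto \sigma(w)-Mw$) is a finite \'etale cover of $\mathbb A^n$ of degree a power of $q$, not an isomorphism, and in particular it has no algebraic section. So there is no way to choose the conjugating element $g$ as a regular function of $y$, and no retraction $\pi$ with your properties (a) and (b) exists as a morphism of (ind-)schemes. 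A second, smaller issue: even granting a single step, the iteration does not terminate in finitely many steps on $IxI$; it only stabilises modulo each congruence subgroup $I[r]$, so at best you would obtain a compatible system of morphisms to $I_MxI'/I'$, not a morphism to $I_MxI_M$ itself.

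This is precisely why the paper does not try to build $\pi$. Instead it takes the specialization over a complete DVR $(R,\mathfrak m)$, normalizes so that $\tilde b\in I(R)\cdot x$ with reduction $wb\sigma(w^{-1})$, and proves: for each $I'$ in a cofinal system of open subgroups of $I$ there is a finite flat extension $R'/R$ and an element $i\in I(R')$ with $i\equiv 1\pmod{\mathfrak m'}$ such that $i\tilde b\sigma(i^{-1})\in I_MxI'$. The extension $R'$ is exactly what absorbs the non-uniqueness of the Artin--Schreier solutions; the paper isolates this in a DVR version of the key lemma (its Lemma \ref{lemvector}), where part (2) explicitly requires passing to $R'$, and then rewrites \cite{GoertzHeNie}, Lemma 3.4.1 over $R$ (its Lemma \ref{lemghn341}). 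Working modulo $I'$ handles the non-termination. To repair your argument you should abandon the global morphism $\pi$ and instead run the GHN algorithm with $R$-coefficients, allowing finite flat base change at each Artin--Schreier step and working modulo a fixed congruence subgroup.
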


Our proof is a refinement of the proof of \cite{GoertzHeNie}, Theorem 3.3.1, or \cite{GHKR2}, Theorem 2.1.2. A technical ingredient is the following lemma.

\begin{lemma}\label{lemvector}
Let $(R,\m)$ be a complete discrete valuation ring with algebraically closed residue field. Let $M\in \Mat_{n\times n}(R)$ and $v\in R^n$.
\begin{enumerate}
\item There is a $w\in R^n$ with $w-M\sigma(w)=v$.
\item There is a ring $(R',\m')$ satisfying the same assumptions as $R$, a surjective finite flat morphism $\Spec R'\rightarrow \Spec R$ and a $w\in \mathbb (R')^n$ with $\sigma(w)-Mw=v$.

\item If in addition $v\equiv 0\pmod{\m}$ in (1) or (2), then we may choose $w\equiv 0$ modulo $\m$ resp.~$\m'$.
\end{enumerate}
\end{lemma}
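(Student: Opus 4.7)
The plan is to prove all three parts by successive approximation modulo powers of a uniformizer $\pi$ of $R$, reducing everything to solvability modulo $\pi$ in the algebraically closed residue field $k=R/\m$. Completeness of $R$ will then let us take a $\pi$-adic limit; and since $k$ is already algebraically closed, the finite flat extension in part (2) is never actually required and we may take $R'=R$.

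The heart of the argument is the base case modulo $\pi$: I must show that the two additive maps
\[
\bar\phi_1\colon k^n\to k^n,\ w\mapsto w-\bar M\,\sigma(w),\qquad \bar\phi_2\colon k^n\to k^n,\ w\mapsto \sigma(w)-\bar M\,w
\]
are surjective. I view both as endomorphisms of the algebraic group $\mathbb G_{a,k}^n$. For $\bar\phi_1$: the Frobenius power $\sigma$ has vanishing differential in characteristic $p$, so $d\bar\phi_1=\mathrm{id}$ and $\bar\phi_1$ is \'etale; its image is then an open subgroup of the connected group $\mathbb G_{a,k}^n$, and hence all of it. For $\bar\phi_2$: the coordinate-ring map $k[y_1,\dots,y_n]\to k[w_1,\dots,w_n]$ sending $y_i\mapsto w_i^q-\sum_j\bar m_{ij}w_j$ makes $k[w]$ a free $k[y]$-module with basis $\{w_1^{a_1}\cdots w_n^{a_n}:0\le a_i<q\}$ of rank $q^n$; this is checked by rewriting each higher power $w_i^q$ via the relation $w_i^q=y_i+\sum_j\bar m_{ij}w_j$ and inducting on total degree. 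So $\bar\phi_2$ is a finite dominant morphism $\mathbb A^n_k\to\mathbb A^n_k$, hence closed and surjective.

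For the lifting step I argue by induction on $N$. Suppose $w^{(N)}\in R^n$ satisfies $\phi(w^{(N)})\equiv v\pmod{\pi^N}$, where $\phi$ denotes either of the two maps, and write $v-\phi(w^{(N)})=\pi^N u$ with $u\in R^n$. Seeking a correction $w^{(N+1)}=w^{(N)}+\pi^N\tilde w$, expansion modulo $\pi^{N+1}$ (using that $\sigma(\pi)$ differs from $\pi$ only by a unit) reduces the requirement to $\bar\phi(\overline{\tilde w})=\bar u$ in $k^n$, which is solvable by the base case. The $\pi$-adic limit $w=\lim w^{(N)}\in R^n$ then solves the original equation.

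For part (3), if $v\in \pi R^n$ we initialize with $w^{(0)}=0$; since $\phi(0)=0\equiv v\pmod\pi$ the induction starts, and every subsequent correction lives in $\pi R^n$, so the limit $w$ does too. The main obstacle will be the base-case surjectivity of $\bar\phi_2$ when $\bar M$ fails to be invertible, since the \'etale argument that handles $\bar\phi_1$ breaks down there; the uniform graded-ring / free-module degree count sketched above is what makes the argument go through for arbitrary $\bar M$.
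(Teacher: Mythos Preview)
Your treatment of (1) and your base-case arguments for both $\bar\phi_1$ and $\bar\phi_2$ are correct and close to the paper's. The genuine gap is in the lifting step, and it breaks your proof of (2).

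The claim that ``$\sigma(\pi)$ differs from $\pi$ only by a unit'' is false here: we are in equal characteristic $p$ and $\sigma$ is the $q$-power Frobenius on $R$ (compare the line ``Replacing $R$ by $\sigma^{-1}(R)=R(\varepsilon^{1/q})$'' in the proof of Theorem~\ref{thmorder}), so $\sigma(\pi)=\pi^q$. For the equation $w-M\sigma(w)=v$ this is harmless, indeed helpful: with $w^{(N+1)}=w^{(N)}+\pi^N\tilde w$ one has $M\sigma(\pi^N\tilde w)\in\pi^{qN}R^n\subseteq\pi^{N+1}R^n$, so the correction is simply $\tilde w=u$ and the induction runs (this is exactly the paper's argument, jumping from $\m^i$ to $\m^{qi}$). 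But for the equation $\sigma(w)-Mw=v$ the same computation gives
\[
\sigma(w^{(N)}+\pi^N\tilde w)-M(w^{(N)}+\pi^N\tilde w)\equiv \sigma(w^{(N)})-Mw^{(N)}-\pi^N M\tilde w\pmod{\pi^{N+1}},
\]
so the correction equation modulo $\pi$ is $\bar M\,\overline{\tilde w}=-\bar u$, \emph{not} $\bar\phi_2(\overline{\tilde w})=\bar u$. When $\bar M$ is singular this need not be solvable, and your assertion that one may always take $R'=R$ is false: already for $n=1$, $M=0$, $v=\pi$ the equation reads $w^q=\pi$, which has no solution in $R$.

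The paper therefore handles (2) by a completely different route: it forms $\tilde R=R[w_1,\dots,w_n]/(w^q-Mw-v)$ directly (your free-module count shows this is finite over $R$), passes to the quotient by a minimal prime, and takes the integral closure to obtain a complete DVR $R'$ finite flat over $R$. For (3) in case (2) one chooses the minimal prime inside $(\m,w_1,\dots,w_n)$. The extension $R'$ is genuinely required.
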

\begin{proof}
For the first statement, \cite{GHKR2}, Lemma 5.1.1 implies that the equation has a solution modulo $\m$. Then we use induction on $i$ to show that the solution can be lifted to a solution modulo $\mathfrak m^i$. Indeed, if $w_i\in\mathbb A^n(R)$ is a solution modulo $\m^i$, then for $d\equiv 0\pmod{\m^i}$ we have $(w_i+d)-M\sigma(w_i+d)\equiv (w_i+d)-M\sigma(w_i)\pmod{\m^{qi}}$, thus $w_{qi}=w_i+ (v-w_i+M\sigma(w_i))$ is a solution modulo $\m^{qi}$. Since $R$ is complete, this proves (1), and also (3) for this case.

For (2) let $\tilde R=R[w_1,\dotsc, w_n]/(w^q-Mw-v)$, a finite integral extension of $R$. Let $\mathfrak p$ be any minimal prime ideal of $\tilde R$. Then $\mathfrak p\cap R=\{0\}$, hence $R\hookrightarrow \tilde R/\mathfrak p$ is a finite integral extension, and $\tilde R/\mathfrak p$ is again an integral domain. Let $R'$ be the integral closure of $\tilde R/\mathfrak p$ in its fraction field (which is a finite field extension of $\Quot R$). Then since $R$ is a complete discrete valuation ring, $R'$ has all claimed properties of (2). For (3) we choose $\mathfrak p$ to be contained in $(\m, w_1,\dotsc, w_n)$ which is a maximal ideal since $v\in \m $.
\end{proof}

\begin{proof}[Proof of Theorem \ref{thmorder}]
By $\sigma$-conjugation with $w$, the condition $b\in \overline{[b']_{M_J}\cap I_J\tilde xI_J}$ is equivalent to $wb\sigma(w^{-1})\in \overline{w[b']_{M_J}\sigma(w^{-1})\cap I_Mx\sigma(I_M)}$. Since the latter set is contained in $\overline{[b']_G\cap IxI}$, one implication follows. 

The $I$-$\sigma$-conjugacy class $\tilde C$ is admissible in $G$ (i.e.~invariant under right multiplication by some open subgroup of $I$, see for example \cite{VWu}, Prop. 2.3), and $I_M x\sigma(I_M)$ and the $I_M$-$\sigma$-conjugacy class of $wb\sigma(w^{-1})$ are admissible in ${}^wM_J=M$. Thus for the other direction it is enough to show that for all elements $I'$ of a fundamental system of neighborhoods of 1 consisting of subgroups of $I$, the image of $c:=wb\sigma(w^{-1})$ in $G(L)/I'$ is contained in the closure of the intersection of $[b']_{G}I'/I'$ and $I_Mx\sigma(I_M)I'/I'$. Furthermore, the assertion holds for $wb\sigma(w^{-1})$ if and only if it holds for some $I_M$-$\sigma$-conjugate of $wb\sigma(w^{-1})$. Thus we may assume that $wb\sigma(w^{-1})\in I_M x$.

By our assumption there is an element $\tilde b\in (IxI)(R)\subseteq LG(R)$ for some complete discrete valuation ring $(R,\m)$ with unformizer $\varepsilon$ and algebraically closed residue field $k$ such that its generic point $\tilde b_{\eta}$ is in $[b']_G\cap IxI$ and such that its special point satisfies $\tilde b_{0}=wb\sigma(w^{-1})$. By \cite{irrmin}, Remark 5.7 we can decompose $\tilde b= i_1xi_2$ with $i_1,i_2\in I(R)$. Here, $i_2$ is unique up to left multiplication by elements of $I\cap x^{-1}Ix$. For the reductions modulo $\m$ we have $\bar i_1 x\bar i_2\in I_M(k)x$, thus $\bar i_2\in (I\cap x^{-1}Ix)(k)$. We lift $\bar i_2$ to some element $d\in (I\cap x^{-1}Ix)(R)$ and replace $i_2$ by $d^{-1}i_2$ and $i_1$ by $i_1xdx^{-1}$. Thus we may assume that $i_2\equiv 1\pmod{\m}$. Replacing $R$ by $\sigma^{-1}(R)=R(\varepsilon^{1/q})$, we have $\sigma^{-1}(i_2)\in I(R)$. We $\sigma$-conjugate $\tilde b$ by $\sigma^{-1}(i_2)$ (without modifying $\tilde b_0$) and may thus assume that $i_2=1$. Hence it is enough to prove the following claim.

{\it Claim.} There is a fundamental system of neighborhoods of $1$ consisting of subgroups $I'$ of $I$ with the following property. For every $I'$ there is a complete discrete valuation ring $R'$ with algebraically closed residue field $k$, a surjective finite flat morphism $\Spec R'\rightarrow \Spec R$, and an element $i\in I(R')$ with $i\equiv 1\pmod{\mathfrak m_{R'}}$ and $i\tilde b\sigma(i^{-1})\in I_MxI'/I'$.

For the proof of this claim we follow the proof of the surjectivity statement of \cite{GoertzHeNie}, Theorem 3.3.1, which claims a similar assertion for $R$ replaced by an algebraically closed field (so that in their case no extension is needed). 

For $n\in\N$ let $T(L)_n$ be the corresponding congruence subgroup as in \cite{PRag_11}, 2.6. For $r\geq 0$ let $I_r$ be the subgroup of $I$ generated by $T(L)_n$ for $n\geq r$ and all affine root subgroup schemes $H_{\alpha+m}$ with $\alpha\in \Phi$ and $m\geq r$ such that $\alpha+m$ is a positive affine root. Let $I_{r^+}=\bigcup_{s>r}I_s$. Then for all $r>0$, we obtain normal subgroups $I_r$ and $I_{r^+}$ of $I$.

Let $M={}^wM_J$, let $N$ be the subgroup of $G$ generated by the root subgroups for roots in $w(\Phi^+-\Phi^+_J)$ and let $\overline N$ be the subgroup of $G$ generated by the root subgroups for roots in $w(\Phi^--\Phi^-_J)$. By intersecting with $I_r$ resp.~with $I_{r^+}$ we obtain normal subgroups $N_r,\overline{N}_{r}$ resp.~$N_{r^+},\overline{N}_{r^+}$. The condition that $x\a$ is a $(J,w,\delta)$-alcove then implies that ${}^{x\sigma}N_r\subseteq N_r$ and ${}^{x\sigma}\overline{N}_r\supseteq\overline{ N}_r$.

\begin{lemma}\label{lemghn341} 
Let $(R,\m)$ be a complete discrete valuation ring with algebraically closed residue field $k$. Let $m\in I_M(R)$ and $r\geq 0$. 

\begin{enumerate}
\item For every $i_+\in N_r(R)$ there is a $b_+\in N_r(R)$ with $b_+i_+{}^{mx\sigma}b_+^{-1}\in N_{r^+}(R)$.
\item For every $i_-\in \sigma(\overline{N}_r(R))$ there is a ring $(R',\m')$ satisfying the same assumptions as $R$, a surjective finite flat morphism $\Spec R'\rightarrow \Spec R$ and an element $b_-\in \overline{N}_r(R')$ such that ${}^{(mx)^{-1}}b_-i_-{}^{\sigma}b_-^{-1}\in \sigma(\overline{N}_{r^+})$.
\item If $i_-$ resp.~$i_+$ are congruent to $1$ modulo $\m$, then $b_-$ resp.~$b_+$ can be chosen to be congruent to $1$ modulo $\m$ resp.~$\m'$.
\end{enumerate}
\end{lemma}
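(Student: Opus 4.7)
The plan is to prove parts (1) and (2) by successive approximation along the depth filtration, reducing at each step to a semi-linear equation on the graded piece that falls under Lemma \ref{lemvector}, and then to invoke part (3) of that lemma for the congruence refinement (3). This follows the scheme of \cite{GoertzHeNie}, Theorem 3.3.1 and \cite{GHKR2}, Theorem 2.1.2, but over the discrete valuation ring $R$ rather than an algebraically closed field, so the new input is precisely Lemma \ref{lemvector}.

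For part (1), the $(J,w,\delta)$-alcove property gives ${}^{x\sigma}N_r\subseteq N_r$, and since $m\in I_M(R)$ normalizes each $N_s$ (because $I_M$ stabilizes the root subgroups constituting $N$), conjugation by $mx\sigma$ descends to a $\sigma$-linear endomorphism of the graded piece $N_r/N_{r^+}$. Using the affine root subgroups to identify $N_r/N_{r^+}$ with $\mathbb{A}^n_R$, this endomorphism takes the form $\beta\mapsto C\sigma(\beta)$ for some $C\in \Mat_{n\times n}(R)$. Writing $b_+,\,i_+$ in these coordinates as $\beta,\iota$, the condition $b_+ i_+ {}^{mx\sigma}b_+^{-1}\in N_{r^+}$ linearizes to $\beta-C\sigma(\beta)=-\iota$, which Lemma \ref{lemvector}(1) solves over $R$. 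Iterating this construction at successive depths $r<r_1<r_2<\cdots$ yields a Cauchy sequence whose telescoping product converges in $N_r(R)$, by completeness of $R$ and the monotone increase of the depth, to the required $b_+$.

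Part (2) is analogous but with the alcove condition used in the opposite direction: ${}^{x\sigma}\overline N_r\supseteq \overline N_r$ gives ${}^{(mx)^{-1}}\overline N_r\subseteq \sigma(\overline N_r)$, so conjugation by $(mx)^{-1}$ descends to a linear homomorphism $D\colon\overline N_r/\overline N_{r^+}\to \sigma(\overline N_r)/\sigma(\overline N_{r^+})$. Linearizing ${}^{(mx)^{-1}}b_- i_- {}^\sigma b_-^{-1}\in \sigma(\overline N_{r^+})$ yields the equation $\sigma(\beta)-D\beta = \iota$, which is precisely the shape handled by Lemma \ref{lemvector}(2); this is where the finite flat extension $R\to R'$ is forced upon us. The main obstacle I anticipate is to avoid introducing a fresh extension at every depth step, since an infinite compositum need not be finite flat over $R$ with algebraically closed residue field. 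I would circumvent this by packaging the whole problem into a single semi-linear equation on the pro-unipotent group $\varprojlim_r \overline N_r/\overline N_{r^+}$, coordinatized as an $R$-module, and applying Lemma \ref{lemvector}(2) only once to the full $D$-operator on that limit, yielding a single extension $R'$ and a single solution $b_-\in \overline N_r(R')$.

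For part (3), at each depth step the input $\iota$ already lies in the relevant maximal ideal by hypothesis, so Lemma \ref{lemvector}(3) lets us pick the solution $\beta$ in that maximal ideal as well. The convergent product defining $b_\pm$ is therefore a product of factors each congruent to $1$ modulo the maximal ideal, and hence itself lies in the required congruence subgroup modulo $\m$ respectively $\m'$, which completes the plan.
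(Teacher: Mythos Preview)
Your underlying mechanism---linearize on the graded piece $N_r/N_{r^+}$ (respectively $\overline N_r/\overline N_{r^+}$), identify the resulting equation with one of the two forms in Lemma~\ref{lemvector}, and invoke that lemma---is exactly what the paper does, namely transplant the proof of \cite{GoertzHeNie}, Lemma~3.4.1 from an algebraically closed field to a complete DVR by swapping \cite{GHKR2}, Lemma~5.1.1 for Lemma~\ref{lemvector}.

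However, you have misread the target of the lemma and therefore over-engineered both parts. The assertion only asks you to push $i_\pm$ from depth $r$ into depth $r^+$, i.e.\ to solve the linearized equation on the \emph{single} finite-dimensional graded piece $N_r/N_{r^+}$ (respectively on $\sigma(\overline N_r)/\sigma(\overline N_{r^+})$). No iteration along $r<r_1<r_2<\cdots$ and no convergence argument is required; one application of Lemma~\ref{lemvector} suffices. The iteration to infinity takes place later, in the proof of Theorem~\ref{thmorder}, not inside this lemma. Once you drop the iteration, your worry in~(2) about accumulating infinitely many finite flat extensions disappears: a single extension $R\to R'$, produced by one call to Lemma~\ref{lemvector}(2), is all that is needed. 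This is fortunate, because your proposed fix---packaging the problem into a single equation on the pro-unipotent limit $\varprojlim \overline N_r/\overline N_{r^+}$---would require Lemma~\ref{lemvector} for infinite $n$, which is not what that lemma provides.

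For (3), your argument is correct and matches the paper: since only one step is taken, Lemma~\ref{lemvector}(3) applied once gives $b_\pm\equiv 1$ modulo the maximal ideal directly.
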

\begin{proof}
The proof of (1) and (2) is the same as that of \cite{GoertzHeNie}, Lemma 3.4.1, replacing references to \cite{GHKR2}, Lemma 5.1.1 by Lemma \ref{lemvector} above. Assertion (3) follows in the same way using (3) of the lemma.
\end{proof}
We return to the proof of the theorem.

We consider a generic Moy-Prasad filtration as explained in \cite{GHKR2}, Section 6. This yields a filtration $I=\bigcup_{r\geq 0}I[r]$ with $I[r]\supset I[s]$ for $r<s$ such that each $I[r]$ is normal in $I$ and a semidirect product of $I[r^+]=\bigcup_{s>r} I[s]$ and some $I\langle r\rangle$ which is either an affine root subgroup or contained in $T(L)_0$. In particular, we also get a corresponding decomposition for every $R$-valued point of $I[r]$.

By the same argument as in \cite{GHKR2}, Section 6 one shows that for every $i$, there is an extension $R'$ of $R$ as in the claim, and an $h\in ({}^{\delta^{-1}(x^{-1})}I\cap I)(R')$ with $h\equiv 1\pmod{\m'}$ such that $h\tilde b\sigma(h^{-1})\in I[i^+]I_Mx$. Since $I_Mx$ is bounded, the subgroups $x^{-1}I_MI[i^+]I_Mx$ form a fundamental system of neighborhoods of $1$ as in the claim. 
\end{proof}

\bibliographystyle{alpha}
\bibliography{references}

\end{document}